\newtheorem{thm}{Theorem}[section]
\newtheorem{prop}[thm]{Proposition}
\newtheorem{lemma}[thm]{Lemma}
\newtheorem{coro}[thm]{Corollary}
\newtheorem{rem}{Remark}[thm]
\newcommand{\Z}{{\mathbb Z}}
\newcommand{\Sp}{{\mathrm{Spec} \:}}
\newcommand{\cI}{{\mathrm{c\text{--} Ind}}}
\DeclareMathOperator{\Spm}{\mathrm{m-Spec}}
\let\c@equation\c@thm
\numberwithin{equation}{section}
\title{The endomorphism ring of projectives and the Bernstein centre}
\author{Alexandre Pyvovarov}
\date{\today}
\begin{document}

\maketitle

\begin{abstract}

Let $F$ be a local non-archimedean field and $\mathcal{O}_F$ its ring of integers. Let $\Omega$ be a Bernstein component of the category of smooth representations of $GL_n(F)$, let $(J, \lambda)$ be a Bushnell-Kutzko $\Omega$-type, and let $\mathfrak{Z}_{\Omega}$ be the centre of the Bernstein component $\Omega$. This paper contains two major results. Let $\sigma$ be a direct summand of $\mathrm{Ind}_J^{GL_n(\mathcal{O}_F)} \lambda$. We will begin by computing $\mathrm{c\text{--} Ind}_{GL_n(\mathcal{O}_F)}^{GL_n(F)} \sigma\otimes_{\mathfrak{Z}_{\Omega}}\kappa(\mathfrak{m})$, where $\kappa(\mathfrak{m})$ is the residue field at maximal ideal $\mathfrak{m}$ of $\mathfrak{Z}_{\Omega}$, and the maximal ideal $\mathfrak{m}$ belongs to a Zariski-dense set in $\mathrm{Spec}\: \mathfrak{Z}_{\Omega}$. This result allows us to deduce that the endomorphism ring $\mathrm{End}_{GL_n(F)}(\mathrm{c\text{--} Ind}_{GL_n(\mathcal{O}_F)}^{GL_n(F)} \sigma)$ is isomorphic to  $\mathfrak{Z}_{\Omega}$, when $\sigma$ appears with multiplicity one in $\mathrm{Ind}_J^{GL_n(\mathcal{O}_F)} \lambda$.

\end{abstract}

\tableofcontents

\section{Introduction}\label{H}

Let $G=GL_n(F)$, where $F$ is a non-archimedean local field and let $E$ be an algebraically closed field of characteristic zero. We will consider smooth $E$-representations of $G$ and of its subgroups. Let $\Omega$ be a Bernstein component of the category of smooth representations of $G$, let $(J, \lambda)$ be a Bushnell-Kutzko $\Omega$-type, such that $J$ is contained in a maximal compact subgroup $K$. We refer the reader to notation section for the definitions.

In \cite{MR1728541} section 6 (just above Proposition 2) the authors define irreducible $K$-representations $\sigma_{\mathcal{P}}(\lambda)$, where $\mathcal{P}$ is  partition valued functions with finite support (cf. section 2 \cite{MR1728541}). One has the decomposition :

\begin{equation}\label{decomp}
\mathrm{Ind}_{J}^{K} \lambda = \bigoplus_{\mathcal{P}} \sigma_{\mathcal{P}}(\lambda)^{\oplus m_{\mathcal{P},\lambda}},
\end{equation}

\noindent where the summation runs over partition valued functions with finite support. The integers $m_{\mathcal{P},\lambda}$ are finite and we call them multiplicities of $\sigma_{\mathcal{P}}(\lambda)$'s. 

There is a natural partial ordering, as defined in \cite{MR1728541}, on the set of partition valued functions. Let $\mathcal{P}_{max}$ be the maximal partition valued function and let $\mathcal{P}_{min}$ the minimal one. Define $\sigma_{max}(\lambda):=\sigma_{\mathcal{P}_{max}}(\lambda)$ and $\sigma_{min}(\lambda):=\sigma_{\mathcal{P}_{min}}(\lambda)$. Both $\sigma_{max}(\lambda)$ and $\sigma_{min}(\lambda)$ occur in $\mathrm{Ind}_{J}^{K} \lambda$ with multiplicity 1 (see Remark \ref{mult}).

\medskip

When the type $(J, \lambda)$ is trivial, $\sigma_{min}(\lambda)$ is $st$, which is the inflation of Steinberg representation of $GL_n(k_F)$ to $K$ and $\sigma_{max}(\lambda)$ is the trivial representation. In this simplest case, when have $\Omega = [T,1]_G$, it follows from \cite[Thm 4.1]{MR1676870} that the action of $\mathfrak{Z}_{\Omega}$ on $\cI_K^G 1$ induces a ring isomorphism $\mathfrak{Z}_{\Omega} \simeq \mathrm{End}_G( \cI_K^G 1)$. The isomorphism $\mathfrak{Z}_{\Omega} \simeq \mathrm{End}_G( \cI_K^G st)$ is a special case of \cite[Corollary 6.1]{MR3508741}. It was pointed out to us by an anonymous referee that the case $\sigma = st$ can be easily deduced from the case $\sigma = 1$ via the Iwahori-Matsumuto involution(cf. \cite{MR1341660}[section 2.2]).

The purpose of this paper is to generalize the isomorphisms above to other Bernstein components and to other multiplicity one direct summands of $\mathrm{Ind}_{J}^{K} \lambda$. When the type $(J, \lambda)$ is trivial, $\sigma_{max}(\lambda)=1$ and $\sigma_{min}(\lambda)=st$  are the only multiplicity free summands of $\mathrm{Ind}_{J}^{K} \lambda$. However, when the type is semi-simple there can be more then 2 multiplicity one direct summands of $\mathrm{Ind}_{J}^{K} \lambda$ (see Remark \ref{mult}). 

In order to get such an isomorphism we will study the $\mathfrak{Z}_{\Omega}$-module \\$\mathrm{Hom}_G(\cI_K^G \sigma_{\mathcal{P}}(\lambda), \cI_K^G \sigma_{\mathcal{P}'}(\lambda))$ for any partition valued functions $\mathcal{P}$ and $\mathcal{P}'$.

Before we state precisely our main results we will introduce some more notation. Recall that $K$ is a maximal compact open subgroup of $G$ containing $J$. Let  $\widehat{K}$ denote the set of all isomorphism classes of irreducible representations of $K$. In order to simplify the notation, the decomposition (\ref{decomp}):
$$\mathrm{Ind}_{J}^{K} \lambda = \bigoplus_{\mathcal{P}} \sigma_{\mathcal{P}}(\lambda)^{\oplus m_{\mathcal{P},\lambda}},$$
\noindent will be written as,

\begin{equation}\label{decomp2}
\mathrm{Ind}_{J}^{K} \lambda = \bigoplus_{\sigma \in \widehat{K}} \sigma^{\oplus m_{\sigma}}.
\end{equation}
\noindent The integers $m_{\sigma}$ are the multiplicities of $\sigma$'s in $\mathrm{Ind}_{J}^{K} \lambda$. It follows that :

\begin{equation} \label{eq:1}
\mathrm{c\text{--} Ind}_{J}^{G} \lambda = \mathrm{c\text{--} Ind}_{K}^{G} (\mathrm{Ind}_{J}^{K} \lambda) = \bigoplus_{\sigma \in \widehat{K}} (\mathrm{c\text{--} Ind}_{K}^{G}\sigma)^{\oplus m_{\sigma}}.
\end{equation}
We will state now our main result:

\begin{thm}\label{0.1}
	Let $\sigma_1, \sigma_2 \in \widehat{K}$ with multiplicities $m_{\sigma_1}$, $m_{\sigma_2}$ respectively. Then $\mathrm{Hom}_G(\cI_K^G \sigma_1, \cI_K^G \sigma_2)$ is a  free $\mathfrak{Z}_{\Omega}$-module of rank $m_{\sigma_1}m_{\sigma_2}$. 
\end{thm}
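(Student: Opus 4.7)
The plan is to realise $\mathrm{Hom}_G(\cI_K^G\sigma_1,\cI_K^G\sigma_2)$ as an idempotent-cut piece of the Bushnell--Kutzko Hecke algebra $\mathcal{H}(G,\lambda):=\mathrm{End}_G(\cI_J^G\lambda)$, whose centre is $\mathfrak{Z}_\Omega$, and then combine the known freeness of $\mathcal{H}(G,\lambda)$ over $\mathfrak{Z}_\Omega$ with the fibre formula announced as the first main result. Using (\ref{eq:1}), choose orthogonal idempotents $e_{\sigma,i}\in\mathcal{H}(G,\lambda)$ projecting $\cI_J^G\lambda$ onto its $i$-th copy of $\cI_K^G\sigma$ (for $i=1,\ldots,m_\sigma$); then for every choice of indices
\[
e_{\sigma_2,j}\,\mathcal{H}(G,\lambda)\,e_{\sigma_1,i}\;\cong\;\mathrm{Hom}_G(\cI_K^G\sigma_1,\cI_K^G\sigma_2)
\]
as $\mathfrak{Z}_\Omega$-modules. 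Bushnell--Kutzko (extended to semisimple types by S\'echerre--Stevens) describe $\mathcal{H}(G,\lambda)$ for $GL_n$ as a tensor product of extended affine Hecke algebras of type $\widetilde{A}$; each such factor is a free module over its centre, so $\mathcal{H}(G,\lambda)$ is a free $\mathfrak{Z}_\Omega$-module and every idempotent corner is finitely generated projective. In particular $M:=\mathrm{Hom}_G(\cI_K^G\sigma_1,\cI_K^G\sigma_2)$ is finitely generated projective over $\mathfrak{Z}_\Omega$.

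To pin down the rank, I would apply the first main result: for $\mathfrak{m}$ in a Zariski dense subset $\mathcal{U}\subset\mathrm{Spec}\,\mathfrak{Z}_\Omega$, one has an explicit description of $\cI_K^G\sigma\otimes_{\mathfrak{Z}_\Omega}\kappa(\mathfrak{m})$. Since each $\cI_K^G\sigma$ is a direct summand of the projective generator $\cI_J^G\lambda$ of $\Omega$, the Bushnell--Kutzko equivalence realises it as a finitely generated projective $\mathcal{H}(G,\lambda)$-module, which lets one commute base change with $\mathrm{Hom}_G$:
\[
M\otimes_{\mathfrak{Z}_\Omega}\kappa(\mathfrak{m})\;\cong\;\mathrm{Hom}_G\bigl(\cI_K^G\sigma_1\otimes_{\mathfrak{Z}_\Omega}\kappa(\mathfrak{m}),\,\cI_K^G\sigma_2\otimes_{\mathfrak{Z}_\Omega}\kappa(\mathfrak{m})\bigr).
\]
For $\mathfrak{m}\in\mathcal{U}$, the right-hand side is computed from the fibre formula and Frobenius reciprocity, with the key numerics being that the $\sigma_i$-isotypic piece of the $K$-restriction of the fibre has dimension $m_{\sigma_i}$ (by (\ref{decomp2})); a direct count then yields $\dim_{\kappa(\mathfrak{m})}M\otimes_{\mathfrak{Z}_\Omega}\kappa(\mathfrak{m})=m_{\sigma_1}m_{\sigma_2}$.

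The rank of a finitely generated projective $\mathfrak{Z}_\Omega$-module is locally constant on $\mathrm{Spec}\,\mathfrak{Z}_\Omega$, which is irreducible; together with the previous paragraph this forces $M$ to have constant rank $m_{\sigma_1}m_{\sigma_2}$. For a Bernstein component of $GL_n$, the centre $\mathfrak{Z}_\Omega$ is the coordinate ring of the quotient of a complex torus by a finite group generated by pseudo-reflections, hence (by the torus analogue of Chevalley--Shephard--Todd) a Laurent polynomial algebra; by Quillen--Suslin (and Swan's extension to Laurent polynomials), every finitely generated projective module of constant rank over such a ring is free. Therefore $M$ is free of rank $m_{\sigma_1}m_{\sigma_2}$. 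The main obstacle will be the fibre-dimension computation: one must pin down $\cI_K^G\sigma\otimes\kappa(\mathfrak{m})$ precisely enough to read off the $\sigma_i$-multiplicities and to verify that base change along $\mathfrak{Z}_\Omega\to\kappa(\mathfrak{m})$ commutes with $\mathrm{Hom}_G$ at these points; both rest on the first main theorem and on the projectivity of $\cI_J^G\lambda$.
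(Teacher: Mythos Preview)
Your proposal is correct and matches the paper's argument almost step for step: projectivity of $\mathrm{Hom}_G(\cI_K^G\sigma_1,\cI_K^G\sigma_2)$ as a summand of the free $\mathfrak{Z}_\Omega$-module $\mathcal{H}(G,\lambda)$, the fibre computation on a Zariski-dense set via the first main result together with Lemma~\ref{lem0.9}, local constancy of the rank, and then freeness via Swan/Quillen--Suslin over the Laurent polynomial ring $\mathfrak{Z}_\Omega$ (which is exactly the paper's citation \cite{MR0469906}). The one place to tighten is your appeal to (\ref{decomp2}) for the key numeric $\dim_E\mathrm{Hom}_K(\sigma_i,P(\chi)|_K)=m_{\sigma_i}$: this does not follow from (\ref{decomp2}) alone, and in the paper it is the content of Lemma~\ref{lem0.10}, proved by combining the inequality $m_\sigma\le n_\sigma$ (from an embedding $\mathrm{Ind}_J^K\lambda\hookrightarrow P(\chi)|_K$ obtained via the Hecke-algebra description) with the double count $\sum_\sigma m_\sigma n_\sigma=\sum_\sigma m_\sigma^2=|W(D)|$ of Corollary~\ref{coro1}.
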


As a corollary of the result above we get:

\begin{coro}
	Let $\sigma \in \widehat{K}$, such that $m_{\sigma}=1$. Then the canonical map $\mathfrak{Z}_{\Omega} \rightarrow \mathrm{End}_{G}(\mathrm{c\text{--} Ind}_{K}^{G} \sigma)$ induces a ring isomorphism:
	$$\mathfrak{Z}_{\Omega} \simeq \mathrm{End}_{G}(\mathrm{c\text{--} Ind}_{K}^{G} \sigma).$$
\end{coro}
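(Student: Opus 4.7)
By Theorem \ref{0.1} applied with $\sigma_1 = \sigma_2 = \sigma$, the multiplicity-one hypothesis gives that $M := \mathrm{End}_G(\cI_K^G \sigma)$ is a free $\mathfrak{Z}_\Omega$-module of rank one. The canonical map $\phi : \mathfrak{Z}_\Omega \to M$ is a ring homomorphism whose image is central in $M$, since Bernstein centre elements act as natural transformations of the identity functor and hence commute with every $G$-equivariant endomorphism. My plan is to show that $\phi$ is bijective using only this rank-one freeness together with the integrality of $\mathfrak{Z}_\Omega$: for $GL_n$, $\mathfrak{Z}_\Omega$ is the ring of $W$-invariants of a Laurent polynomial algebra on a connected torus, hence a domain.

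I would first fix a free generator $e$ of $M$ and write $1_M = \phi(z_0)\,e$ and $e^2 = \phi(a)\,e$ for some $z_0, a \in \mathfrak{Z}_\Omega$. Since $\cI_K^G \sigma$ is nonzero, $1_M \ne 0$, so $z_0 \ne 0$. Using the centrality of $\phi(z_0)$ to commute it past $e$, together with the fact that $\phi$ is a ring homomorphism, I compute
\[
\phi(z_0)\,e \;=\; 1_M \;=\; 1_M^2 \;=\; \bigl(\phi(z_0)\,e\bigr)\bigl(\phi(z_0)\,e\bigr) \;=\; \phi(z_0^2)\,e^2 \;=\; \phi(z_0^2 a)\,e.
\]
Freeness of $M$ at $e$ forces $z_0 = z_0^2 a$ in $\mathfrak{Z}_\Omega$, and cancellation in the domain (using $z_0 \ne 0$) gives $z_0 a = 1$. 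Hence $z_0 \in \mathfrak{Z}_\Omega^{\times}$ and $\phi(z_0)$ is a unit in $M$ with inverse $\phi(z_0^{-1})$.

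Surjectivity of $\phi$ then follows from $\phi(z_0)\,e = 1_M$: solving gives $e = \phi(z_0^{-1}) \in \phi(\mathfrak{Z}_\Omega)$, so $M = \mathfrak{Z}_\Omega \cdot e \subseteq \phi(\mathfrak{Z}_\Omega)$. Injectivity is equally immediate: if $\phi(z) = 0$ then $z \cdot 1_M = (z z_0)\cdot e = 0$, so $z z_0 = 0$ by freeness and $z = 0$ by the domain property. Therefore $\phi$ is a ring isomorphism. Since Theorem \ref{0.1} does the heavy lifting, there is no genuine obstacle here; the corollary is a formal manipulation of the module and ring structure, the only auxiliary input being the standard fact that $\mathfrak{Z}_\Omega$ is an integral domain.
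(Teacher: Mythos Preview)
Your argument is correct and is exactly the deduction the paper leaves implicit: Corollary~\ref{coro2} is stated without proof as an immediate consequence of Theorem~\ref{prop1}, and you have spelled out that consequence carefully. One small remark: the appeal to $\mathfrak{Z}_\Omega$ being a domain is not needed, since computing $e = 1_M \cdot e = \phi(z_0)\,e^2 = \phi(z_0 a)\,e$ gives $z_0 a = 1$ directly, after which both surjectivity and injectivity follow from $z_0$ being a unit.
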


\noindent this is the statement of Corollary \ref{coro2}. 

Let me finish this introduction by saying that the representation-theoretic results proven here will be used by \cite{Pyv1} to establish the Breuil-Schneider conjecture in many new cases, following the method of \cite{MR3529394}.

This paper is organised in the following manner. In section \ref{H.1} we will recall some facts about representations of $G$ and prove a few easy lemmas. Next, in section \ref{H.2}, we will prove some results about Bernstein centre. These results will allow us to study the specialization of a projective generator at maximal ideals that belong to some dense set. This will be achieved in section \ref{H.3}. Then in sections \ref{H.4} and \ref{H.5} we collect some technical results that will be needed in the next section. Then in section \ref{H.6} we will prove the main result of this paper. 

\subsection*{Notation}\label{notation}

Let $G$ be $p$-adic reductive group and let $E$ be an algebraically closed field of characteristic zero. Let  $\mathcal{R}(G)$ be the category of all smooth  $E$-representations of $G$.We denote by $i_{P}^{G} : \mathcal{R}(M) \longrightarrow \mathcal{R}(G)$ the normalized parabolic induction functor, where $P=MN$ is a parabolic subgroup of $G$ with Levi subgroup $M$. Let $\overline{P}$ be the opposite parabolic with respect to $M$. We use the notation $\mathrm{Ind}$ and $\cI$ to denote the induction and compact induction respectively.

The Bernstein decomposition expresses the category $\mathcal{R}(G)$ as the product of certain indecomposable full subcategories, called Bernstein components. These components are parametrized by the inertial classes. Let me now recall the definition of an inertial class. Let $M$ be a Levi subgroup of some parabolic subgroup of $G$ and let $\rho$ be an irreducible supercuspidal representation of $M$ and consider a set of pairs $(M, \rho)$ as above. We say that two pairs $(M_1, \rho_1)$ and $(M_2, \rho_2)$ are inertially equivalent if and only if there is $g \in G$ and an unramified character $\chi$ of $M_2$ such that, $M_2=M_1^{g}$ and $\rho_2 \simeq \rho_1^g \otimes\chi$, where $M_1^g:=g^{-1}M_1g$ and $\rho_1^g(x) = \rho_1(gxg^{-1})$, $\forall x \in M_1^g$. An equivalence class of $(M, \rho)$ will be denoted $[M,\rho]_{G}$. The set of all such equivalence classes will be denoted by $\mathcal{B}(G)$.

\medskip

In order to state the Bernstein decomposition, let me introduce some further notation. We are given an inertial class $\Omega:=[M,\rho]_{G}$, where $\rho$ is a supercuspidal representation of $M$ and $D:=[M,\rho]_{M}$. To any inertial class $\Omega$ we may associate a full subcategory $\mathcal{R}^{\Omega}(G)$ of $\mathcal{R}(G)$, such that $(\pi,V)$ is an object of $\mathcal{R}^{\Omega}(G)$ if and only if every irreducible $G$-subquotient $\pi_{0}$ of $\pi$ appear as a composition factor of $i_{P}^{G}(\rho \otimes \omega)$ for $\omega$ some unramified character of $M$ and $P$ some parabolic subgroup of $G$ with Levi factor $M$. The category $\mathcal{R}^{\Omega}(G)$ is called a Bernstein component of $\mathcal{R}(G)$. We will say that a representation $\pi$ is in $\Omega$ if $\pi$ is an object of $\mathcal{R}^{\Omega}(G)$. According to \cite{MR771671}, we have a decomposition:
\[\mathcal{R}(G) = \prod_{\Omega \in \mathcal{B}(G)} \mathcal{R}^{\Omega}(G).\]
So in order to understand the category $\mathcal{R}(G)$, it is enough to restrict our attention to the components. We may understand those components via the theory of types, developed by Bushnell and Kutzko. This theory allows us to parametrize all the irreducible representations of $G$ up to inertial equivalence using irreducible representations of compact open subgroups of $G$.

Let me briefly define Bushnell--Kutzko types. Let $J$ be a compact open subgroup of $G$ and let $\lambda$ be an irreducible representation of $J$. Let $K$ be a maximal compact open subgroup of $G$ containing $J$. We say that $(J, \lambda)$ is an $\Omega$-type if and only if for every irreducible object $(\pi,V)$ of $\mathcal{R}^{\Omega}(G)$, $V$ is generated by  the $\lambda$-isotypical component of $V$ as $G$-representation. 

\medskip

Let $F$ be a non-archimedean local field, with a finite residue field $k_{F}$. Let $\mathcal{O}_F$ be its complete discrete valuation ring, let $\mathfrak{p}$ be the maximal ideal of $\mathcal{O}_F$ with uniformizer $\varpi$, and let $q=|k_F|$. In this paper we only consider the case $G=GL_n(F)$.

For $G=GL_n(F)$, the types can be constructed in an explicit manner (cf. \cite{MR1204652}, \cite{MR1643417} and \cite{MR1711578}) for every Bernstein component. Moreover, Bushnell and Kutzko have shown that the Hecke algebra $\mathcal{H}(G, \lambda):= \mathcal{H}(G, J, \lambda):=\mathrm{End}_{G}(\cI_J^G \lambda)$ is naturally isomorphic to a tensor product of affine Hecke algebras of type A. The simplest example of a type is $(I, 1)$, where $I$ is Iwahori subgroup of $G$ and $1$ is the trivial representation of $I$. In this case $\Omega = [T,1]_G$, where $T$ is the subgroup of diagonal matrices. We will refer to example as the Iwahori case.

Let $\mathcal{R}_{\lambda}(G)$ be the full subcategory of $\mathcal{R}(G)$ such that $(\pi,V)$ is an object of $\mathcal{R}_{\lambda}(G)$ if and only if $V$ is generated by $V^{\lambda}$ (the $\lambda$-isotypical component of $V$) as $G$-representation.

\bigskip

Moreover every Bernstein component has a description as a category of modules over $\mathcal{H}(G, \lambda)$. Indeed, for any $\Omega$-type $(J,\lambda)$, by Theorem 4.2 (ii)\cite{MR1643417},  we have a functor:
$$\begin{array}{ccccc}
\mathfrak{M}_{\lambda} & : & \mathcal{R}_{\lambda}(G) & \to & \mathcal{H}(G,\lambda)-Mod \\
& & \pi &  \mapsto & \mathrm{Hom}_{J}(\lambda, \pi) = \mathrm{Hom}_{G}(\cI_J^G \lambda, \pi)\\
\end{array},$$

\noindent which is an equivalence of categories. Since  $(J,\lambda)$ is an $\Omega$-type, we have $\mathcal{R}^{\Omega}(G)=\mathcal{R}_{\lambda}(G)$.

\medskip

Denote by $W$ the vector space on which the representation $\lambda$ is realized. Next, let $(\check{\lambda},W^{\vee})$ denote the contragradient of $(\lambda,W)$.

Then by (2.6) \cite{MR1711578}, the Hecke algebra $\mathcal{H}(G, \lambda):=\mathrm{End}_G(\cI_J^G \lambda)$ can be identified with the space of compactly supported functions $f : G \longrightarrow \mathrm{End}_{E}(W^{\vee})$ such that $f(j_1.g.j_2) = \check{\lambda}(j_1)\circ f(g) \circ \check{\lambda}(j_2)$, with $j_1$, $j_2 \in J$ and $g \in G$ and the multiplication of two elements $f_1$ and $f_2$ is given by the convolution:
$$f_1*f_2(g) = \int_G f_1(x)\circ f_2(x^{-1}g) dx$$

For $u \in \mathrm{End}_E(W^{\vee})$, we write $\check{u} \in \mathrm{End}_{E}(W)$ for the transpose of $u$ with respect of the canonical pairing between $W$ and $W^{\vee}$. This gives $(\check{\lambda}(j))^{\vee} = \lambda(j)$, for $j \in J$. For $f \in \mathcal{H}(G,\lambda)$, define $\check{f} \in \mathcal{H}(G, \check{\lambda})$, by $\check{f}(g) = f(g^{-1})^{\vee}$, for all $g \in G$.

Write $\mathfrak{Z}_{\Omega}$ for the centre of category $\mathcal{R}^{\Omega}(G)$ and $\mathfrak{Z}_{D}$ for the centre of category $\mathcal{R}^{D}(M)$, which is defined the same way as $\mathcal{R}^{\Omega}(G)$. Recall that the centre of a category is the ring of endomorphisms of the identity functor. For example the centre of the category $\mathcal{H}(G,\lambda)-Mod$ is $Z(\mathcal{H}(G,\lambda))$, where $Z(\mathcal{H}(G,\lambda))$ is the centre of the ring $\mathcal{H}(G,\lambda)$. We will call $\mathfrak{Z}_{\Omega}$ a Bernstein centre.

\medskip

\section{Classical results and commutative algebra}\label{H.1}

We will start stating a few very useful results and we will introduce more notation. Recall that $\Omega:=[M,\rho]_{G}$, where $\rho$ is a supercuspidal representation of $M$ and $D:=[M,\rho]_{M}$. Write $\mathfrak{Z}_{\Omega}$ for the centre of category $\mathcal{R}^{\Omega}(G)$ and $\mathfrak{Z}_{D}$ for the centre of category $\mathcal{R}^{D}(M)$. Combining together theorem in section VI.4.4.(p.232) and the first lemma in section VI.10.3. (p.311) both in \cite{MR2567785}, we get following theorem:

\begin{thm} \label{thm0.1}
	$\mathcal{H}(G,\lambda)$ is a free finitely generated $\mathfrak{Z}_{D}$-module.
\end{thm}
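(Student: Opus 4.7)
The plan is to assemble two results from Vignéras's book \cite{MR2567785} into a single statement. The first ingredient is an identification of $\mathfrak{Z}_D$ as a concrete commutative algebra. Since $(J,\lambda)$ is part of a Bushnell--Kutzko $G$-cover of some $M$-type $(J_M, \lambda_M)$, the categorical equivalence $\mathfrak{M}_{\lambda_M} : \mathcal{R}^D(M) \simeq \mathcal{H}(M, \lambda_M)-\mathrm{Mod}$, applied at the level of centres, gives $\mathfrak{Z}_D \simeq Z(\mathcal{H}(M,\lambda_M))$. For $G = GL_n(F)$, the algebra $\mathcal{H}(M, \lambda_M)$ is itself commutative (a tensor product of Laurent polynomial rings, one factor per block of the Levi), so $\mathfrak{Z}_D$ is in fact $\mathcal{H}(M, \lambda_M)$ itself.

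The second ingredient, furnished by the theorem in section VI.4.4 of \cite{MR2567785}, is a canonical injective $E$-algebra morphism $t : \mathcal{H}(M, \lambda_M) \hookrightarrow \mathcal{H}(G, \lambda)$ whose image lies inside the centre $Z(\mathcal{H}(G,\lambda))$. This is the Hecke-algebra incarnation of parabolic induction through the cover, and it endows $\mathcal{H}(G, \lambda)$ with the structure of a $\mathfrak{Z}_D$-algebra. By construction, this is the same structure as the one obtained from the categorical action of $\mathfrak{Z}_\Omega$ on $\mathcal{H}(G, \lambda) = \mathrm{End}_G(\cI_J^G \lambda)$ composed with a canonical morphism $\mathfrak{Z}_D \to \mathfrak{Z}_\Omega$.

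The third ingredient is the first lemma of section VI.10.3 in \cite{MR2567785}, which provides a Bernstein--Lusztig style PBW decomposition of $\mathcal{H}(G,\lambda)$: it exhibits $\mathcal{H}(G, \lambda)$ as a free module of finite rank over $t(\mathcal{H}(M, \lambda_M))$, with an explicit basis indexed by a finite group (essentially a relative Weyl group). Combined with the identification of $\mathfrak{Z}_D$ with $\mathcal{H}(M, \lambda_M)$ via $t$, this delivers both the finite generation and the freeness asserted in the theorem.

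The main obstacle in carrying out this plan is bookkeeping of a compatibility, rather than an argument: one must check that the $\mathfrak{Z}_D$-module structure on $\mathcal{H}(G, \lambda)$ obtained from the categorical action agrees with the one coming from the morphism $t$, so that the two cited results can legitimately be stitched together. This compatibility is not entirely formal; it rests on the identification of parabolic induction and Jacquet restriction with the corresponding change-of-Hecke-algebra functors through the cover, which is built into Vignéras's framework but is the substantive content being invoked here.
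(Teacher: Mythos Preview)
Your approach is essentially the paper's: the paper simply states that the theorem follows by combining the theorem of VI.4.4 and the first lemma of VI.10.3 in \cite{MR2567785}, and you have spelled out what that combination says.

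One correction, though it does not invalidate the argument. The image of $t:\mathcal{H}(M,\lambda_M)\hookrightarrow\mathcal{H}(G,\lambda)$ is \emph{not} contained in the centre in general. Already in the Iwahori case this is the Bernstein commutative subalgebra of the affine Hecke algebra, which is abelian but not central; only its $W(D)$-invariants are central, and this is exactly the content of $\mathfrak{Z}_\Omega=\mathfrak{Z}_D^{W(D)}$ (Lemma~\ref{lem0.1}). Consequently there is no canonical morphism $\mathfrak{Z}_D\to\mathfrak{Z}_\Omega$ in the direction you wrote, and the $\mathfrak{Z}_D$-module structure on $\mathcal{H}(G,\lambda)$ is not induced from the categorical $\mathfrak{Z}_\Omega$-action via such a map; it comes solely from multiplication by the (non-central) subalgebra $t(\mathcal{H}(M,\lambda_M))$. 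The freeness statement from VI.10.3 is for this (say, left) module structure, and that is all that is needed; the compatibility paragraph you flagged as the ``main obstacle'' is therefore not required for the theorem as stated, and the claimed centrality should simply be dropped.
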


The following result is the proved in section VI.10.3. \cite{MR2567785} (p.314), just before the statement of a theorem:

\begin{lemma} \label{lem0.1}
	$$\mathfrak{Z}_{\Omega}=\mathfrak{Z}_{D}^{W(D)}$$
	\noindent where
	$$W(D) = \left\lbrace g \in G | g^{-1}Mg=M \: \mathrm{and} \: [M,\rho^{g}]_{M}=D \right\rbrace / M.$$
\end{lemma}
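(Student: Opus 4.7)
My plan is to identify both Bernstein centres with rings of regular functions on the associated Bernstein varieties, and then to apply invariant theory.

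First I would recall Bernstein's description of $\mathfrak{Z}_D$. Let $X_{nr}(M)$ denote the algebraic torus of unramified $E$-characters of $M$, and set $H_\rho := \{\chi \in X_{nr}(M) : \rho \otimes \chi \cong \rho\}$, which is a finite subgroup. The twist orbit $\Psi(\rho) := \{\rho \otimes \chi : \chi \in X_{nr}(M)\}$ is then an affine algebraic variety isomorphic to $X_{nr}(M)/H_\rho$. Bernstein's fundamental theorem for supercuspidal blocks yields a natural ring isomorphism $\mathfrak{Z}_D \cong \mathcal{O}(\Psi(\rho))$ under which a regular function $f$ acts on each $\rho\otimes\chi$ by the scalar $f(\rho\otimes\chi)$ (and on every object of $\mathcal{R}^D(M)$ by extension).

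Next I would analyse the action of $W(D)$ on $\Psi(\rho)$. An element $gM \in W(D)$ sends $\rho\otimes\chi$ to $\rho^g \otimes \chi^g$; the defining condition $[M,\rho^g]_M = D$ is precisely what guarantees $\rho^g \in \Psi(\rho)$, so the action is well-defined. Directly from the definition of inertial equivalence, two points of $\Psi(\rho)$ yield $G$-inertially equivalent pairs if and only if they lie in the same $W(D)$-orbit. The Bernstein variety of $\Omega$ is therefore $\Psi(\rho)/W(D)$, and Bernstein's theorem on the $G$-side gives
$$\mathfrak{Z}_\Omega \;\cong\; \mathcal{O}(\Psi(\rho)/W(D)) \;=\; \mathcal{O}(\Psi(\rho))^{W(D)} \;\cong\; \mathfrak{Z}_D^{W(D)}.$$
To check that this composite coincides with the natural inclusion asserted in the lemma, I would show it is induced by the map $\mathfrak{Z}_\Omega \to \mathfrak{Z}_D$ arising from parabolic induction: an element $z \in \mathfrak{Z}_\Omega$ acts on $i_P^G(\rho\otimes\chi)$ by a scalar on the Zariski-dense locus where this induction is irreducible, and the inertial equivalence $[\rho \otimes \chi]_G = [\rho^g \otimes \chi^g]_G$ for $g \in W(D)$ forces that scalar to be $W(D)$-invariant in $\chi$.

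The hardest step is Bernstein's identification $\mathfrak{Z}_D \cong \mathcal{O}(\Psi(\rho))$ itself. This is a non-trivial structural theorem whose proof exploits that supercuspidal representations are projective-injective in $\mathcal{R}(M)$, so $\mathcal{R}^D(M)$ decomposes cleanly into blocks indexed by $\Psi(\rho)$, with the $\mathcal{O}(\Psi(\rho))$-action controlling all endomorphisms of the identity functor.
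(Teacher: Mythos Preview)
Your proposal is correct and follows the standard argument via Bernstein's description of the centre as regular functions on the Bernstein variety. Note, however, that the paper does not actually give its own proof of this lemma: it simply cites the result from Renard's book \cite{MR2567785}, section VI.10.3 (p.~314), so there is no ``paper's own proof'' to compare against---your sketch is essentially the argument one finds in that reference.
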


Since $\mathfrak{Z}_D$ is an $E$-algebra of finite type we may use the results of \cite{MR782297} Chapitre 5 \S 1.9, to get:

\begin{lemma} \label{lem0.2}
	$\mathfrak{Z}_{D}$ is a $\mathfrak{Z}_{\Omega}$-module(algebra) of finite type.
\end{lemma}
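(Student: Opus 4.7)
The plan is to reduce directly to the classical invariant-theoretic fact already cited as the source of the lemma. By Lemma \ref{lem0.1}, $\mathfrak{Z}_{\Omega} = \mathfrak{Z}_{D}^{W(D)}$, so it suffices to show that $\mathfrak{Z}_{D}$ is finitely generated as a module over its subring of $W(D)$-invariants. Two inputs are required. First, $\mathfrak{Z}_{D}$ is an $E$-algebra of finite type; this is stated in the paragraph preceding the lemma and follows from the standard description of the Bernstein centre of $M$ as the ring of regular functions on the affine variety of unramified twists of $\rho$ modulo its (finite) stabiliser. Second, $W(D)$ is a finite group: by its very definition it is a subgroup of $N_{G}(M)/M$, which is the classical finite relative Weyl group of the Levi $M$ in $G$. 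The action of $W(D)$ on $\mathfrak{Z}_{D}$ is by $E$-algebra automorphisms, as the functoriality of the centre in $\mathcal{R}^{D}(M)$ makes manifest.

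Once both ingredients are in place I would invoke \cite{MR782297} Chapitre~5 \S1.9 in the following explicit form. For any $a \in \mathfrak{Z}_{D}$, the monic polynomial
\[
P_{a}(X) \;=\; \prod_{w \in W(D)} (X - w\cdot a)
\]
has coefficients in $\mathfrak{Z}_{D}^{W(D)} = \mathfrak{Z}_{\Omega}$ and annihilates $a$, so $\mathfrak{Z}_{D}$ is integral over $\mathfrak{Z}_{\Omega}$. Combined with the fact that a finite set of $E$-algebra generators of $\mathfrak{Z}_{D}$ is a fortiori a set of $\mathfrak{Z}_{\Omega}$-algebra generators, the standard lemma that a finitely generated integral algebra extension is module-finite yields that $\mathfrak{Z}_{D}$ is a finitely generated $\mathfrak{Z}_{\Omega}$-module.

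There is no real obstacle here: the entire content of the proof is the integrality polynomial $P_{a}(X)$ together with the elementary fact that finitely generated plus integral implies module-finite. The only point outside of pure commutative algebra is finiteness of $W(D)$, which is classical Bernstein theory, and the hypothesis that $\mathfrak{Z}_{D}$ is of finite type over $E$, which has been recorded just above the statement.
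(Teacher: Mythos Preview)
Your proof is correct and is essentially the same approach the paper takes: the paper simply invokes \cite{MR782297} Chapitre~5 \S1.9 together with the facts that $\mathfrak{Z}_{D}$ is an $E$-algebra of finite type and $\mathfrak{Z}_{\Omega}=\mathfrak{Z}_{D}^{W(D)}$, without writing out the argument. You have unpacked precisely that citation (the integrality polynomial $P_a(X)=\prod_{w}(X-w\cdot a)$ and the ``finitely generated plus integral implies module-finite'' step), so there is no difference in strategy.
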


Write $\chi$ for algebra homomorphism $\chi : \mathfrak{Z}_{\Omega} \rightarrow E$. Let $\mathfrak{m}=\mathrm{Ker}(\mathfrak{Z}_{\Omega} \xrightarrow{\chi} E)$ a maximal ideal of $\mathfrak{Z}_{\Omega}$ and $\kappa(\mathfrak{m})$ the residue field which is isomorphic to $E$. From now on we will always identify an algebra homomorphism $\chi : \mathfrak{Z}_{\Omega} \rightarrow E$ and a maximal ideal $\mathfrak{m}=\mathrm{Ker}(\mathfrak{Z}_{\Omega} \xrightarrow{\chi} E)$ of $\mathfrak{Z}_{\Omega}$.

\begin{lemma}\label{lem0.7}
	Let $A$ and $B$ be two $E$-algebras. Let $G$ a finite group acting on $A$ and $H$ another finite group acting on $B$, so that $G \times H$ acts on $A\otimes_{E} B$. Then the invariants under action of $G \times H$ are $(A\otimes_{E} B)^{G \times H} = (A^{G})\otimes_{E}( B^{H})$ 
\end{lemma}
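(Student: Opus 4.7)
The inclusion $A^G \otimes_E B^H \subseteq (A\otimes_E B)^{G\times H}$ is clear: a pure tensor $a\otimes b$ with $a\in A^G$ and $b\in B^H$ is fixed by every element $(g,h)$ of $G\times H$, and linearity gives the general case. Moreover, since $E$ is a field, the inclusions $A^G \hookrightarrow A$ and $B^H \hookrightarrow B$ remain injective after tensoring, so $A^G\otimes_E B^H$ is naturally an $E$-subspace of $A\otimes_E B$.

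For the reverse inclusion, the plan is to use the Reynolds (averaging) operator, which is available because $E$ has characteristic zero and $G$, $H$ are finite. Define the $E$-linear projections
$$p_G : A \longrightarrow A^G, \qquad p_G(a) = \frac{1}{|G|}\sum_{g\in G} g\cdot a,$$
$$p_H : B \longrightarrow B^H, \qquad p_H(b) = \frac{1}{|H|}\sum_{h\in H} h\cdot b.$$
These are idempotent with images $A^G$ and $B^H$ respectively. Because $E$ is a field, the tensor product $p_G\otimes p_H : A\otimes_E B \to A\otimes_E B$ is a well-defined $E$-linear map whose image is exactly $A^G\otimes_E B^H$.

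The key computation is that $p_G\otimes p_H$ coincides with the Reynolds operator $p_{G\times H}$ for the $G\times H$-action on $A\otimes_E B$. Indeed, on a pure tensor,
$$(p_G\otimes p_H)(a\otimes b) = \frac{1}{|G||H|}\sum_{g\in G}\sum_{h\in H} (g\cdot a)\otimes(h\cdot b) = \frac{1}{|G\times H|}\sum_{(g,h)\in G\times H}(g,h)\cdot(a\otimes b),$$
and both sides extend linearly to $A\otimes_E B$. Consequently, if $x\in(A\otimes_E B)^{G\times H}$, then $(g,h)\cdot x = x$ for every $(g,h)$, so
$$x = \frac{1}{|G\times H|}\sum_{(g,h)\in G\times H}(g,h)\cdot x = (p_G\otimes p_H)(x) \in A^G\otimes_E B^H,$$
proving the required equality.

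There is essentially no substantial obstacle here; the only point that requires a moment of care is the identification of the image of $p_G\otimes p_H$ with $A^G\otimes_E B^H$ inside $A\otimes_E B$, which depends on the flatness of $A^G$ and $B^H$ over the field $E$, and on $\mathrm{char}(E)=0$ making the normalizing factors $|G|^{-1}$ and $|H|^{-1}$ meaningful.
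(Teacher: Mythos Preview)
Your proof is correct. The paper states this lemma without proof, treating it as an elementary fact; your argument via the Reynolds averaging operators $p_G$, $p_H$ and the identification $p_G\otimes p_H = p_{G\times H}$ is the standard way to supply the details, and it goes through exactly as you wrote since $E$ has characteristic zero and tensor products over a field are exact.
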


\begin{lemma}\label{lem0.13}
	Let $A$ be a commutative $E$-algebra, which is also  a Jacobson ring. Let $f \in A$, which is a not a zero divisor in $A$. Then the set $\Spm(A[\frac{1}{f}])$ is Zariski dense in $\mathrm{Spec}(A)$.
\end{lemma}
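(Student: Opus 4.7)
The plan is to chain together three standard facts: ($i$) $A[1/f]$ is again a Jacobson ring, ($ii$) in a Jacobson ring the set of maximal ideals is dense in the spectrum, and ($iii$) $D(f) \subset \mathrm{Spec}(A)$ is dense because $f$ avoids every minimal prime.

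First I would realize $A[1/f]$ as the finitely generated $A$-algebra $A[X]/(fX-1)$. Then, since $A$ is a Jacobson ring, so is any finitely generated $A$-algebra (this is the standard generalization of the Nullstellensatz, see Bourbaki, Commutative Algebra, Ch.~V, \S 3). In particular $A[1/f]$ is Jacobson, which by definition means that every prime ideal is an intersection of maximal ideals. Consequently $\Spm(A[1/f])$ is Zariski dense in $\mathrm{Spec}(A[1/f])$.

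Next I would use the canonical identification of $\mathrm{Spec}(A[1/f])$ with the principal open subset $D(f) = \{\mathfrak{p} \in \mathrm{Spec}(A) : f \notin \mathfrak{p}\}$, under which $\Spm(A[1/f])$ maps into $D(f) \subset \mathrm{Spec}(A)$. It then suffices to show that $D(f)$ itself is dense in $\mathrm{Spec}(A)$, for then the closure of $\Spm(A[1/f])$ in $\mathrm{Spec}(A)$ contains the closure of $D(f)$, which equals $\mathrm{Spec}(A)$.

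To prove $D(f)$ is dense, equivalently that $V(f)$ contains no irreducible component of $\mathrm{Spec}(A)$, I would show that $f$ is not contained in any minimal prime of $A$. For this I rely on the general fact that in any commutative ring, every element of a minimal prime $\mathfrak{p}$ is a zero-divisor: after localizing at $\mathfrak{p}$, the maximal ideal $\mathfrak{p}A_{\mathfrak{p}}$ is the unique prime and hence the nilradical, so any $x\in\mathfrak{p}$ satisfies $x^{n}s=0$ in $A$ for some $s\notin\mathfrak{p}$, forcing $x$ to be a zero-divisor. Since $f$ is by hypothesis a non-zero-divisor, it escapes every minimal prime, so $V(f)$ misses every irreducible component and $D(f)$ is dense. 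The argument is essentially a sequence of standard commutative-algebra invocations; the only point requiring mild care is verifying that one does not need a Noetherian hypothesis on $A$, which is handled by the elementary localization argument just sketched.
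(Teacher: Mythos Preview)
Your argument is correct. The paper actually states this lemma without proof, treating it as a standard fact from commutative algebra, so there is no ``paper's own proof'' to compare against. Your chain of reductions --- $A[1/f]$ is Jacobson because it is finitely presented over the Jacobson ring $A$, hence $\Spm(A[1/f])$ is dense in $\mathrm{Spec}(A[1/f])\cong D(f)$, and $D(f)$ is dense in $\mathrm{Spec}(A)$ because a non-zero-divisor lies in no minimal prime --- is the expected one, and the transitivity-of-density step you use is valid since $D(f)$ carries the subspace topology. One small remark: the paper later uses this lemma by identifying $\Spm(A[1/f])$ with $\{\mathfrak m\in\Spm(A)\mid f\notin\mathfrak m\}$; this identification is justified precisely because $A$ is Jacobson (maximal ideals of a finite-type $A$-algebra contract to maximal ideals of $A$), which is consistent with your setup even though you did not need to invoke it explicitly.
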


\begin{lemma}\label{lem0.14}
	Let $Z:=E[X_{1},\ldots, X_{e}]$ and $S:=Z^{\mathfrak{S}_{e}}$, where the symmetric group $\mathfrak{S}_{e}$ acts by permutation of variables, i.e. $\sigma \in \mathfrak{S}_{e}$ acts by $\sigma.X_{i}=X_{\sigma(i)}$. Let $s_{i}:= \sum\limits_{1\leq j_{1}< \ldots <j_{i}\leq e} X_{j_{1}}\ldots X_{j_{i}}$ be the elementary symmetric polynomial, then $S \simeq E[s_{1},\ldots, s_{e}]$. Then $Z$ is a free $S$-module of rank $e!$ with basis given by monomials $X^{\nu}:= X_{1}^{\nu(1)}\ldots X_{e}^{\nu(e)}$, such that $0 \leq \nu(i) <i$ for $1 \leq i \leq e$. Let $\Delta =\det (\mathrm{tr}_{Z/S}(X^{\mu}.X^{\nu}))_{\mu,\nu}$ and let $d=\prod_{i<j}(X_{i}-X_{j})^{2}$. Then $\Delta$ is some power of $d$.
\end{lemma}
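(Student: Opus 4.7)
The plan is to handle the two assertions separately: for (i) I prove freeness by induction on $e$, and for (ii) I identify $\det(M)^2 = \Delta$ with a power of $d$ via a Vandermonde-type divisibility/degree argument.

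For (i), write $Z_k := E[X_1,\ldots,X_k]$ and $S_k := Z_k^{\mathfrak{S}_k}$. The recursion $e_j^{(k)} = e_j^{(k-1)} + X_k\, e_{j-1}^{(k-1)}$ (inverted for $e_j^{(k-1)}$) gives $S_{k-1}[X_k] = S_k[X_k]$ inside $Z_k$. The monic polynomial $P(T) := \prod_{i=1}^k(T-X_i) \in S_k[T]$ of degree $k$ annihilates $X_k$; comparing degrees in the tower $\mathrm{Frac}(S_k) \subset \mathrm{Frac}(S_k)(X_k) \subset \mathrm{Frac}(Z_k)$ — where the top step has degree $(k-1)!$ by induction (using that $X_k$ is transcendental over $\mathrm{Frac}(Z_{k-1})$), and the overall degree is $k!$ by Galois theory — forces $[\mathrm{Frac}(S_k)(X_k):\mathrm{Frac}(S_k)] = k$, so $P(T)$ is the minimal polynomial of $X_k$. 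Hence $S_k[X_k]$ is $S_k$-free of rank $k$ on $1,X_k,\ldots,X_k^{k-1}$. Combining with $Z_k = Z_{k-1}\otimes_{S_{k-1}} S_{k-1}[X_k]$ (from transcendence of $X_k$ over $Z_{k-1}$) and the inductive hypothesis for $Z_{k-1}/S_{k-1}$ yields the staircase basis for $Z_k$ over $S_k$.

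For (ii), since $\mathrm{Frac}(Z)/\mathrm{Frac}(S)$ is Galois with group $\mathfrak{S}_e$, one has $\mathrm{tr}_{Z/S}(z)=\sum_{\sigma\in\mathfrak{S}_e}\sigma(z)$. Let $M := (\sigma(X^\nu))_{\sigma,\nu}$ be the $e!\times e!$ matrix with rows indexed by $\sigma\in\mathfrak{S}_e$ and columns by the staircase multi-indices. Then $(M^{\top}M)_{\mu,\nu} = \mathrm{tr}_{Z/S}(X^\mu X^\nu)$, so $\Delta = \det(M)^2$. I aim to show $\det(M) = c\,V^{e!/2}$ for some nonzero scalar $c\in E$, where $V := \prod_{i<j}(X_i-X_j)$. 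For divisibility: fix $i<j$; the rows indexed by $\sigma$ and $(ij)\sigma$ have entries $\sigma(X^\nu)$ and $(ij)(\sigma(X^\nu))$, whose difference lies in $(X_i-X_j)\cdot Z$, since $(ij)$ acts on $Z$ by fixing every $X_k$ with $k\ne i,j$ and swapping $X_i,X_j$. Pairing the $e!$ rows into $e!/2$ such pairs and performing the row subtractions exhibits $(X_i-X_j)^{e!/2}$ as a factor of $\det(M)$; as the $X_i-X_j$ for $i<j$ are pairwise coprime in the UFD $Z$, this yields $V^{e!/2}\mid\det(M)$. For the degree: every Leibniz term in $\det(M)$ picks one entry per row and column, with the column $\nu$ contributing degree $|\nu|$, so $\det(M)$ is homogeneous of degree $\sum_\nu|\nu| = \tfrac12\, e!\binom{e}{2}$, which equals $\deg V^{e!/2}$. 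Hence $\det(M) = c\,V^{e!/2}$ with $c\in E$, and $c\ne 0$ because $\Delta$ is the discriminant of the trace form of the separable extension $\mathrm{Frac}(Z)/\mathrm{Frac}(S)$, which is nondegenerate. Therefore $\Delta = c^2\,V^{e!} = c^2\,d^{e!/2}$, a nonzero scalar times a power of $d$.

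The main subtle point is the divisibility step: one must pair rows using the specific transposition $(ij)$ acting on the left and check that the resulting row difference lies in $(X_i-X_j)\cdot Z$, which holds because $(ij)$ acts trivially on polynomials not involving $X_i$ or $X_j$. Once the correct pairing is in place, the degree count pins the exponent down to $e!/2$. Everything else — the Galois-tower computation in (i), and the reduction to a Vandermonde-type determinant in (ii) — is routine.
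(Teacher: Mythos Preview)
Your argument is correct, and it takes a genuinely different route from the paper's in both parts.

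For (i), the paper simply cites Bourbaki (Alg.~IV, \S6.1, Th\'eor\`eme~1~c)) for the freeness statement with the staircase basis, whereas you supply an inductive proof via the tower $S_k \subset S_k[X_k] = S_{k-1}[X_k] \subset Z_k$. Your proof is self-contained; the paper's is shorter by citation.

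For (ii), the approaches diverge substantially. The paper first proves that $d$ is \emph{irreducible} in $S$ (using unique factorisation in $Z$ and the $\mathfrak{S}_e$-invariance of any putative factor), then argues geometrically: the vanishing locus $V(\Delta)$ is the ramification locus of $\operatorname{Spec} Z \to \operatorname{Spec} S$, which coincides set-theoretically with $V(d)$, so irreducibility of $d$ forces $\Delta$ to be a power of $d$ (up to a unit of $S$, i.e.\ a nonzero scalar in $E$). You instead write $\Delta = \det(M)^2$ with $M = (\sigma(X^\nu))_{\sigma,\nu}$, extract $(X_i-X_j)^{e!/2}$ from $\det(M)$ by the row-pairing trick, and pin down the remaining factor as a scalar by a homogeneous-degree count. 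Your approach is more computational but buys more: it gives the precise exponent $\Delta = c^2 d^{\,e!/2}$ and avoids the separate irreducibility argument. The paper's approach is more conceptual and shorter once one accepts the identification of $V(\Delta)$ with the branch locus, but it does not determine the exponent. Both proofs leave an undetermined nonzero scalar, which is harmless for the application in Lemma~\ref{lem0.5} (only the vanishing locus matters there).
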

\begin{proof} According to \cite{MR1994218} IV.$\S$6.1 Theorem 1 c) $Z$ is a free $S$-module of rank $e!$. Let's first prove that $d$ is irreducible in $S$. Assume that $d=d_1d_2=(-1)^{e(e-1)/2}\prod_{i\neq j} (X_i - X_j)$ with $d_1$ and $d_2$ both in $S$ and have positive degree. Let $T=\{(i,j)|i\neq j\}$. Since $Z$ is an UFD, then by uniqueness of factorization we have:
	\[d_k=c_k\prod_{(i,j)\in T_k}(X_i-X_j),\]
	
	\noindent where $k\in \{1,2\}$ and $c_k \in E$.  The subsets $T_k$ of $T$ are such that $T_k\neq \emptyset$, $T_1\cup T_2 = T$ and $T_1\cap T_2=\emptyset$. Since $d_k \in S$, then for all $\sigma \in \mathfrak{S}_e$ we have that $\sigma.d_k=d_k$, then
	\[\prod_{(i,j)\in T_k}(X_{\sigma(i)}-X_{\sigma(j)})=\prod_{(i,j)\in T_k}(X_i-X_j), \]
	
	\noindent again by uniqueness of factorization in $Z$, we may identify factors on both sides. In particular we have that if $(i,j)\in T_k$ then for any permutation $\sigma$ we have that $(\sigma(i),\sigma(j)) \in T_k$. This implies that $T \subseteq T_k$, a contradiction.
	
	The map $f:\Sp Z \to \Sp S$ induced by an embedding $S\hookrightarrow Z$ is \'{e}tale at a point $x$ if and only if it is unramified at $x$. However the zero locus of $\Delta$, $V(\Delta)$, is equal to the set of points where the map $f$ is ramified (i.e. is not \'{e}tale), by definition of the discriminant. The map $f$ is not \'{e}tale when $X_i=X_j$ for $i\neq j$, this is the zero locus of $d$. Since $d$ is irreducible in $S$, it follows that $\Delta$ is some power of $d$.
\end{proof}

\section{Properties of Bernstein centre}\label{H.2}

Recall that $\Omega:=[M,\rho]_{G}$, where $\rho$ is a supercuspidal representation of $M$ and $D:=[M,\rho]_{M}$. Our goal is to determine $\mathfrak{Z}_{D}\otimes_{\mathfrak{Z}_{\Omega}} \kappa(\mathfrak{m})$ when $\mathfrak{m}$ varies through a dense set of maximal ideals in $\Sp \mathfrak{Z}_{\Omega}$, where $\mathfrak{Z}_{\Omega}$ for the centre of category $\mathcal{R}^{\Omega}(G)$ and $\mathfrak{Z}_{D}$ for the centre of category $\mathcal{R}^{D}(M)$. 

Let's first describe the action of $W(D)$ on  $\mathfrak{Z}_{D}$. Let $\mathcal{X}(M)$ be the group of unramified characters of $M$ and $\mathcal{X}(M)(\rho) = \left\lbrace \chi \in \mathcal{X}(M) | \rho \simeq \rho \otimes \chi \right\rbrace $. Let $M^{\circ}$ be the intersection of the kernels of the characters $\chi \in \mathcal{X}(M)$ and let $T$ be the intersection of the kernels of the $\chi \in \mathcal{X}(M)(\rho)$. The restriction  to $T$ induces a bijection $\mathcal{X}(M)/ \mathcal{X}(M)(\rho) \simeq \mathcal{X}(T)$. Let $Irr(D)$ be the set of irreducible representations in $D$. Every such a representation is of the form $\rho \otimes \chi$ for $\chi \in \mathcal{X}(M)$. Thus we have a bijection $\mathcal{X}(M)/ \mathcal{X}(M)(\rho) \simeq Irr(D)$, $ \chi \mapsto \rho \otimes \chi$. Composing it with previous bijection we get a bijection $Irr(D) \simeq \mathcal{X}(T)$. Now $\mathcal{X}(T)$ is naturally isomorphic to the set of $E$-algebra homomorphisms from $E[T/M^{\circ}]$ to $E$. It is explained in \cite{MR2567785} section V.4.4 that we have an identification $\mathfrak{Z}_{D} \simeq E[T/M^{\circ}]$, so that the bijection $Irr(D) \simeq \mathcal{X}(T)$ induces a natural bijection between $Irr(D)$ and $\Spm(\mathfrak{Z}_{D})$. The group $W(D)$ acts on $Irr(D)$ by conjugation. For each $w \in W(D)$ let $\xi_w \in \mathcal{X}(M)$ be any character such that $\rho^{w} \simeq \rho \otimes \xi_w$. We will use the same symbol $\xi_{w}$ for the restriction of $\xi_w$ to $T$. If $\chi \in \mathcal{X}(M)$ then $(\rho \otimes \chi)^{w} \simeq \rho \otimes \chi^{w}.\xi_w$. Thus the action of $W(D)$ on $\mathcal{X}(T)$ is given by $w.\chi = \chi^{w}.\xi_{w}$. It is immediate that the induced action on $E[T/M^{\circ}] \simeq\mathfrak{Z}_{D}$ is given by $w.(tM^{\circ}) =\xi_{w}(t)^{-1}t^{w}M^{\circ}$.

\begin{lemma}\label{lem0.15}
	An $E$-algebra homomorphism $\mathrm{X} :\mathfrak{Z}_D \to E$ can be lifted to an unramified character $\overline{\chi}$ of $M$, i.e. we have a surjective map:
	\[ \mathcal{X}(M) \twoheadrightarrow \mathrm{Hom}_{E-alg}(\mathfrak{Z}_{D},E). \]
	This map has the following description, given an unramified character $\overline{\chi}$ of $M$, we can associate to it a $E$-algebra homomorphism $\mathrm{X} :\mathfrak{Z}_D \to E$, defined as:
	$$\begin{array}{ccccc}
	\mathrm{X} & : & \mathfrak{Z}_{D} & \to & E \\
	& & z &  \mapsto & z(\overline{\chi}), \\
	\end{array}$$ 
	
	\noindent where $z(\overline{\chi})$ is a scalar by which $z$ acts on one dimensional representation $\overline{\chi}$ of $M$.

\end{lemma}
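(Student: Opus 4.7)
The plan is to reduce the statement to a surjectivity question about restriction of characters, using the identification $\mathfrak{Z}_{D} \simeq E[T/M^{\circ}]$ recalled from \cite{MR2567785} V.4.4. Taking $E$-algebra homomorphisms to $E$ on both sides gives a natural bijection $\mathrm{Hom}_{E-alg}(\mathfrak{Z}_{D}, E) \simeq \mathrm{Hom}_{\mathrm{grp}}(T/M^{\circ}, E^{\times}) = \mathcal{X}(T)$, and similarly $\mathcal{X}(M) \simeq \mathrm{Hom}_{\mathrm{grp}}(M/M^{\circ}, E^{\times})$ (by definition of $M^{\circ}$ as the intersection of kernels of all $\chi \in \mathcal{X}(M)$). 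Under these identifications, the map claimed in the lemma is simply the restriction map along the inclusion $T/M^{\circ} \hookrightarrow M/M^{\circ}$.

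The main task is then to show that this restriction is surjective. I would apply $\mathrm{Hom}_{\mathbb{Z}}(-, E^{\times})$ to the short exact sequence of abelian groups
\[ 0 \to T/M^{\circ} \to M/M^{\circ} \to M/T \to 0. \]
Surjectivity is equivalent to the vanishing of $\mathrm{Ext}^{1}_{\mathbb{Z}}(M/T, E^{\times})$. Since $E$ is an algebraically closed field of characteristic zero, $E^{\times}$ is a divisible abelian group, hence an injective object in the category of $\mathbb{Z}$-modules, so this Ext vanishes and surjectivity follows. This is the one step with any real content, and it is soft.

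Finally, the explicit description is an unwinding of the identifications. Given $\overline{\chi} \in \mathcal{X}(M)$, the induced $E$-algebra homomorphism $\mathfrak{Z}_{D} \to E$ factors as $\mathfrak{Z}_{D} \simeq E[T/M^{\circ}] \hookrightarrow E[M/M^{\circ}] \to E$, where the last arrow is the linear extension of $[m] \mapsto \overline{\chi}(m)$. For $z \in \mathfrak{Z}_{D}$, viewed after this inclusion as an element of the group algebra $E[M/M^{\circ}]$, the resulting scalar is exactly the one by which $z$ acts on the one-dimensional $M$-representation $\overline{\chi}$, matching the formula in the statement. The only possible obstacle is the book-keeping between the abstract centre $\mathfrak{Z}_{D}$ and its concrete incarnation $E[T/M^{\circ}]$; once that identification from \cite{MR2567785} is in place, no further work is required.
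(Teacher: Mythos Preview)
Your proof is correct and follows the same route as the paper: both arguments rest on the identification $\mathfrak{Z}_{D}\simeq E[T/M^{\circ}]$ and the resulting bijection $\mathrm{Hom}_{E\text{-alg}}(\mathfrak{Z}_{D},E)\simeq \mathcal{X}(T)$, then appeal to the surjectivity of the restriction $\mathcal{X}(M)\to\mathcal{X}(T)$. The only difference is that the paper simply quotes the bijection $\mathcal{X}(M)/\mathcal{X}(M)(\rho)\simeq\mathcal{X}(T)$ from the preceding paragraph, whereas you supply an independent justification of surjectivity via the vanishing of $\mathrm{Ext}^{1}_{\mathbb{Z}}(M/T,E^{\times})$ using divisibility of $E^{\times}$; this is a genuine (if easy) addition rather than a different strategy.
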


\begin{proof} By the description of the action of $W(D)$ on  $\mathfrak{Z}_{D}$, above this lemma, we have the following isomorphisms:
	\[\mathcal{X}(M)/ \mathcal{X}(M)(\rho) \simeq \mathcal{X}(T) \simeq \mathrm{Hom}_{E-alg}(\mathfrak{Z}_{D},E),\]
	
	\noindent hence a surjective map $\mathcal{X}(M) \longrightarrow \mathrm{Hom}_{E-alg}(\mathfrak{Z}_{D},E)$.
\end{proof}

Let $\mathfrak{m}=\mathrm{Ker}(\mathfrak{Z}_{\Omega} \xrightarrow{\overline{\chi}} E)$ a maximal ideal of $\mathfrak{Z}_{\Omega}$ and $\kappa(\mathfrak{m})$ the residue field which is isomorphic to $E$, because $E$ is algebraically closed.

\begin{lemma} \label{lem0.5}
	There is a dense set in $\Spm \mathfrak{Z}_{\Omega}$ of maximal ideals $\mathfrak{m} \in \Spm \mathfrak{Z}_{\Omega}$, such that:
	$$\mathfrak{Z}_{D}\otimes_{\mathfrak{Z}_{\Omega}} \kappa(\mathfrak{m}) \simeq \prod_{k=1}^{|W(D)|}\kappa(\mathfrak{M}_{i}),$$
	
	\noindent where $\mathfrak{M}_{i}$ are maximal ideals of $\mathfrak{Z}_{D}$ above $\mathfrak{m}$, and $\kappa(\mathfrak{M}_{i})$ the residue fields. Moreover $\mathfrak{Z}_{D}$ is free over $\mathfrak{Z}_{\Omega}$ of rank $|W(D)|$.
\end{lemma}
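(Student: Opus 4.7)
The plan is to unpack $\mathfrak{Z}_D$ as a Laurent polynomial ring carrying a (twisted) permutation-type action of $W(D)$, deduce freeness and compute the discriminant by reducing to Lemma \ref{lem0.14}, and then apply Lemma \ref{lem0.13} to produce the dense set of maximal ideals.

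First I would use the identification $\mathfrak{Z}_D \simeq E[T/M^\circ]$ recalled from \cite{MR2567785} V.4.4. Since $T/M^\circ$ is a finitely generated free abelian group of some rank $e$, $\mathfrak{Z}_D$ is a Laurent polynomial algebra $E[X_1^{\pm 1}, \ldots, X_e^{\pm 1}]$. For $G = GL_n(F)$, after grouping the cuspidal factors of $\rho$ into inertial classes, $W(D)$ is a product $\prod_j \mathfrak{S}_{e_j}$ of symmetric groups with $\sum_j e_j = e$, each permuting a block of the variables. The action $w.(tM^\circ) = \xi_w(t)^{-1} t^w M^\circ$ differs from this permutation action only by the translation cocycle $w \mapsto \xi_w$ on the character torus, which I would absorb into a change of variables on $\Sp \mathfrak{Z}_D$ so as to reduce to the genuine permutation action of $W(D)$ on $E[X_1^{\pm 1}, \ldots, X_e^{\pm 1}]$.

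Next I would apply Lemma \ref{lem0.14} in each block and combine the blocks via Lemma \ref{lem0.7}: the polynomial subring $Z := E[X_1, \ldots, X_e]$ is free of rank $\prod_j e_j! = |W(D)|$ over $S := Z^{W(D)}$, with discriminant a power of the product $\prod_j \prod_{1 \leq i < k \leq e_j}(X_i - X_k)^2$. Localising at $X_1 \cdots X_e \in S$ transfers both the freeness and the non-vanishing of the discriminant to the Laurent polynomial setting, producing the freeness statement: $\mathfrak{Z}_D$ is free of rank $|W(D)|$ over $\mathfrak{Z}_\Omega = \mathfrak{Z}_D^{W(D)}$, with an explicit nonzero discriminant $\Delta \in \mathfrak{Z}_\Omega$.

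Finally I would invoke Lemma \ref{lem0.13}. By Noether's finiteness theorem for invariants of a finite group, $\mathfrak{Z}_\Omega$ is a finitely generated $E$-algebra, hence Jacobson; it is a domain as a subring of the domain $\mathfrak{Z}_D$, so $\Delta$ is a non-zero-divisor. Lemma \ref{lem0.13} then gives that $\Spm(\mathfrak{Z}_\Omega[1/\Delta])$ is Zariski-dense in $\Sp \mathfrak{Z}_\Omega$. For any such $\mathfrak{m}$, the specialisation $\mathfrak{Z}_D \otimes_{\mathfrak{Z}_\Omega} \kappa(\mathfrak{m})$ is a finite étale $\kappa(\mathfrak{m})$-algebra of rank $|W(D)|$; since $\kappa(\mathfrak{m}) = E$ is algebraically closed, it must split as $\prod_{k=1}^{|W(D)|} \kappa(\mathfrak{M}_k)$ with each $\kappa(\mathfrak{M}_k) = E$, as required. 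The main obstacle I anticipate is carrying out the reduction to the pure permutation action carefully: one must check that the cocycle $\xi_w$ can be absorbed into a change of coordinates on $\Sp \mathfrak{Z}_D$ in a way that preserves both the invariant subring and the Vandermonde discriminant of Lemma \ref{lem0.14}, so that the clean polynomial conclusions actually apply to $\mathfrak{Z}_D/\mathfrak{Z}_\Omega$.
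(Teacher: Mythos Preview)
Your proposal is essentially the paper's argument. The paper also reduces to a block-diagonal permutation action of $W(D) \simeq \prod_i \mathfrak{S}_{e_i}$ on a Laurent polynomial ring, applies Lemma~\ref{lem0.14} in each block, combines via Lemma~\ref{lem0.7}, and finishes with Lemma~\ref{lem0.13}.

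The one place where you diverge is your handling of the cocycle $\xi_w$, which you flag as the main obstacle. The paper sidesteps this entirely: since $\Omega = [M,\rho]_G$ depends only on the inertial class of $(M,\rho)$, one may choose the representative $\rho$ so that within each block the same supercuspidal $\pi_i$ is repeated identically, i.e.\ $\rho_i = \pi_i \otimes \cdots \otimes \pi_i$. With this normalization $\rho^w \simeq \rho$ for every $w \in W(D)$, so $\xi_w = 1$ and the action on $\mathfrak{Z}_D$ is the honest permutation of the variables. No change of coordinates is needed, and your anticipated difficulty disappears.
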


\begin{proof} Let's first deal with two particular cases before dealing with general case.
	
	\noindent \textbf{1. Supercuspidal case. } In this case we have $M=G$, then $\mathfrak{Z}_{D} \simeq \mathfrak{Z}_{\Omega}$. Everything is clear, there is nothing to prove.
	
	\medskip
	
	\noindent \textbf{2. Simple type case. } Assume now that $(J, \lambda)$ is a simple type, without loss of generality we may assume then $M=GL_{k}(F)^{e}$ and $\rho = \pi \otimes \ldots \otimes \pi$ ($e$ times), where $\pi$ is a supercuspidal representation of $GL_{k}(F)$.
	
	By Theorem (6.6.2) \cite{MR1204652}, there is a maximal type $(J_{0}, \lambda_{0})$ of $GL_{k}(F)$,  a field extension $\Gamma$ of $F$ and a uniquely determined representation $\Lambda$ of $\Gamma^{\times}J_{0}$ such that $\Lambda| J_{0} = \lambda_{0}$ and $\pi = \mathrm{c\text{--} Ind}_{\Gamma^{\times}J_{0}}^{GL_{k}(F)}\Lambda$. From Frobenius reciprocity it follows that there is a Hecke algebra isomorphism $\mathcal{H}(M, \lambda_{M}) \simeq \mathcal{H}(\widetilde{J_{M}}, \widetilde{\lambda_{M}})$, because any $g \in M$ that intertwines $\lambda_{M}$ lies in $\widetilde{J_{M}}$, where $\widetilde{J_{M}}$ is a subgroup of $M$ compact modulo centre of $M$ such that $J_M=\widetilde{J_{M}} \cap K \cap M$ and $J_M:=J\cap M$. Since $\widetilde{J_{M}}/J_{M}$ is a free abelian group, $\mathcal{H}(\widetilde{J_{M}}, \widetilde{\lambda_{M}})$ is commutative, and we have an isomorphism $\mathcal{H}(\widetilde{J_{M}}, \widetilde{\lambda_{M}}) \simeq E[\widetilde{J_{M}}/J_{M}]$. Therefore we have:
	\begin{align}
	\mathfrak{Z}_{D} & \simeq \mathcal{H}(M, \lambda_{M}) \simeq \mathcal{H}(\widetilde{J_{M}}, \widetilde{\lambda_{M}}) \simeq E[\widetilde{J_{M}}/J_{M}]  \simeq E [(\Gamma^{\times}J_{0})^{e}/J_{0}^{e}] \nonumber \\
	& \simeq E [(\Gamma^{\times}J_{0}/J_{0})^{e}] \simeq E [(\Gamma^{\times}/\mathcal{O}_{\Gamma}^{\times})^{e}] \simeq E [((\varpi_{\Gamma})^{\Z})^{e}] \simeq E[X_{1}^{\pm 1}, \ldots, X_{e}^{\pm 1}], \nonumber
	\end{align}

	\noindent where $\mathcal{O}_{\Gamma}$ is the ring of integers of $\Gamma$ and $\varpi_{\Gamma}$ an uniformizer. Since $\rho = \pi \otimes \ldots \otimes \pi$, it follows from the description action of $W(D)$ on $\mathfrak{Z}_{D}$ in the beginning of this section, that the characters $\xi_w$ are trivial, i.e. $\xi_w=1$ for all $w \in W(D)$. Then  
	the group  $W(D) \simeq \mathfrak{S}_{e}$ acts by permutation of variables $X_{i}$ on $\mathfrak{Z}_{D} \simeq E[X_{1}^{\pm 1}, \ldots, X_{e}^{\pm 1}]$.
	
	Let $Z:=E[X_{1},\ldots, X_{e}]$ and $S:=Z^{\mathfrak{S}_{e}}$. Let $s_{i}:= \sum\limits_{1\leq j_{1}< \ldots <j_{i}\leq e} X_{j_{1}}\ldots X_{j_{i}}$ be the $i$-th elementary symmetric polynomial,then $S \simeq E[s_{1},\ldots, s_{e}]$. It follows that 
	\begin{align}
	\mathfrak{Z}_{\Omega} & =  \mathfrak{Z}_{D}^{W(D)} \simeq E[X_{1}^{\pm 1}, \ldots, X_{e}^{\pm 1}]^{\mathfrak{S}_{e}} \simeq E[X_{1}, \ldots, X_{e}]^{\mathfrak{S}_{e}} \otimes_E E[X_{1}^{-1}, \ldots, X_{e}^{-1}]^{\mathfrak{S}_{e}} \nonumber \\ 
	& \simeq E[s_{1}, \ldots, s_{e}] \otimes_E E[s'_{1}, \ldots, s'_e] \simeq E[s_{1}, \ldots, s_{e}, s'_{1}, \ldots,s'_{e-1} s_e^{-1}]\nonumber \\ 
	& \simeq E[s_{1}, \ldots,s_{e-1}, s_{e}^{\pm 1}],\nonumber
	\end{align}

	\noindent where $s'_i$ are symmetric polynomials in the variable $X_j^{-1}$. The last isomorphism follows from the fact that $s'_i=s_{e-i}/s_e$, for $0\leq i \leq e-1$, and $s'_e=s_e^{-1}$. After a localization with respect of $\left\lbrace s_{e}^{n}\right\rbrace_{n \geq 0}$  we see that $\mathfrak{Z}_{D} = Z_{s_{e}}$ is a free $\mathfrak{Z}_{\Omega} = S_{s_{e}}$-module of rank $|W(D)|=e!$ with basis given by monomials $X^{\nu}:= X_{1}^{\nu(1)}\ldots X_{e}^{\nu(e)}$, such that $0 \leq \nu(i) <i$ for $1 \leq i \leq e$, by Lemma \ref{lem0.14}. Let $d=\prod_{i<j}(X_{i}-X_{j})^{2} \in \mathfrak{Z}_{\Omega}$. By Lemma \ref{lem0.14} the discriminant is some power of $d$.
	
	When the specialization $d(\mathfrak{m}):=d \otimes \kappa(\mathfrak{m})$ of $d$ at a maximal ideal $\mathfrak{m}$ is non zero, then $\mathfrak{m}$ is of form $(s_{1}-a_{1},\ldots, s_{e}-a_{e}) $, where the $a_{1}$,...,$a_{e}$ are such that the polynomial $f \in \kappa(\mathfrak{m})[X]$ defined by $f=X^{e}+\sum\limits_{k=1}^{e}(-1)^{k}a_{k}X^{e-k}$ has $e$ distinct roots, say $\alpha_{1}$,..., $\alpha_{e}$. Let $w \in W(D)\simeq \mathfrak{S}_{e}$, set $\mathfrak{M}_{w}$ the kernel of homomorphism $\mathfrak{Z}_{D} \longrightarrow E$ sending $X_{k} \mapsto \alpha_{w(k)}$. Moreover $\mathfrak{M}_{w}$ is a maximal ideal of $\mathfrak{Z}_{D}$ above $\mathfrak{m}$. We have a natural surjection :
	$$\mathfrak{Z}_{D}\otimes_{\mathfrak{Z}_{\Omega}} \kappa(\mathfrak{m})\twoheadrightarrow \prod_{w \in W(D)} \kappa(\mathfrak{M}_{w}).$$
	
	Since $\dim_{\kappa(\mathfrak{m})}(\mathfrak{Z}_{D}\otimes_{\mathfrak{Z}_{\Omega}} \kappa(\mathfrak{m})) = |W(D)|$, this surjection is an isomorphism of $E$ vector spaces by comparing the dimensions. Then $\mathfrak{Z}_{D}\otimes_{\mathfrak{Z}_{\Omega}} \kappa(\mathfrak{m})$ is a product of $|W(D)|$ copies of $E$, since $E$ is assumed to be algebraically closed. 
	
	Moreover, the set $S:= \left\lbrace \mathfrak{m} \in \Spm(\mathfrak{Z}_{\Omega}) | d(\mathfrak{m}) \neq 0 \right\rbrace = \Spm(\mathfrak{Z}_{\Omega}[\frac{1}{d}])$ is not empty and Zariski dense, because of the Lemma \ref{lem0.13}.
	\medskip
	
	\noindent \textbf{3. General case. } Now let's treat the general case, where the type $(J, \lambda)$ is semi-simple. We may always assume that $M=\prod_{i=1}^{s} M_{i}$ and $\rho = \bigotimes_{i=1}^{s} \rho_{i}$, where $M_{i}=GL_{n_{i}}(F)^{e_{i}}$ and $\rho_{i}$ is a supercuspidal representation of $M_{i}$. Define $G_{i} = GL_{n_{i}e_{i}}(F)$, $\Omega_{i} = [M_{i}, \rho_{i}]_{G_{i}}$, $D_{i} = [M_{i}, \rho_{i}]_{M_{i}}$. 
	Let $\overline{M}$  be the unique Levi subgroup of $G$ which contains the $N_{G}(M)$-stabilizer of the inertia class $D$ and is minimal for this property. Section 1.5 in \cite{MR1711578} applied to $\mathcal{H}(\overline{M}, \lambda_{M}) \simeq \mathfrak{Z}_{D}$ gives:
	$$\mathfrak{Z}_{D} \simeq \bigotimes_{i=1}^{s}\mathfrak{Z}_{D_{i}}$$
	
	\noindent and 
	$$W(D) \simeq \prod_{i=1}^{s} W(D_{i}).$$
	
	\noindent The action of $W(D)$ on $\mathfrak{Z}_{D}$ is such that every  $W(D_{i})$ acts only on $\mathfrak{Z}_{D_{i}}$.
	
	\noindent An inductive application of Lemma \ref{lem0.7}, to the previous decomposition of $\mathfrak{Z}_{D}$ gives:
	$$\mathfrak{Z}_{\Omega} \simeq \bigotimes_{i=1}^{s}\mathfrak{Z}_{\Omega_{i}}.$$
	
	\noindent By previous case we have the following non canonical isomorphisms :
	$$\mathfrak{Z}_{D_{i}} \simeq E[X_{1,i}^{\pm 1}, \ldots, X_{e_{i},i}^{\pm 1}],$$
	$$\mathfrak{Z}_{\Omega_{i}} \simeq E[X_{1,i}^{\pm 1}, \ldots, X_{e_{i},i}^{\pm 1}]^{\mathfrak{S}_{e_{i}}}.$$
	
	\noindent We have $W(D_{i}) \simeq \mathfrak{S}_{e_{i}}$ and we may assume that $\mathfrak{S}_{e_{i}}$ acts on $E[X_{1,i}^{\pm 1}, \ldots, X_{e_{i},i}^{\pm 1}]$ by permutation, since it is always the case after an appropriate linear change of variables. Moreover $\mathfrak{Z}_{D_{i}}$ is free $\mathfrak{Z}_{\Omega_{i}}$-module of rank $e_{i}!$, and let  $d_{e_{i}}=\prod_{k<l}(X_{k,i}-X_{l,i})^{2}$.
	
	Then $\mathfrak{Z}_{D}$ is free $\mathfrak{Z}_{\Omega}$ module of rank $|W(D)|$ and define $d = \prod_{i=1}^{s} d_{e_{i}}$.  The proof of general case ends exactly in the same way as in the simple type case and the set $S':=\Spm(\mathfrak{Z}_{\Omega}[\frac{1}{d}])$ is not empty and Zariski dense, by Lemma~\ref{lem0.13}.
\end{proof}

\begin{lemma} \label{lem0.3}
	$\mathcal{H}(G,\lambda)$ is a free and finitely generated $\mathfrak{Z}_{\Omega}$-module. 
\end{lemma}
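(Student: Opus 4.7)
The plan is straightforward: combine Theorem \ref{thm0.1} with the freeness statement at the end of Lemma \ref{lem0.5}, using the transitivity of freeness along a tower of rings.

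First I would invoke Theorem \ref{thm0.1} to get a finite $\mathfrak{Z}_D$-basis $\{e_1,\ldots,e_N\}$ of $\mathcal{H}(G,\lambda)$. Next I would invoke Lemma \ref{lem0.5}, whose last sentence provides a $\mathfrak{Z}_{\Omega}$-basis $\{f_1,\ldots,f_{|W(D)|}\}$ of $\mathfrak{Z}_D$. A routine check then shows that the products
\[
\{\, e_i f_j \,:\, 1\leq i\leq N,\ 1\leq j\leq |W(D)|\,\}
\]
form a $\mathfrak{Z}_{\Omega}$-basis of $\mathcal{H}(G,\lambda)$: every element of $\mathcal{H}(G,\lambda)$ can be written uniquely as $\sum_i z_i e_i$ with $z_i \in \mathfrak{Z}_D$, and each $z_i$ can in turn be written uniquely as $\sum_j a_{ij} f_j$ with $a_{ij}\in \mathfrak{Z}_{\Omega}$; both uniqueness statements transport to the joint expression $\sum_{i,j} a_{ij}(e_i f_j)$. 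This gives freeness of rank $N\cdot |W(D)|$ and hence, in particular, finite generation.

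There is no real obstacle here; the content of the lemma has already been absorbed into Theorem \ref{thm0.1} and Lemma \ref{lem0.5}, and all that remains is the classical fact that if $C$ is free of finite rank over $B$ and $B$ is free of finite rank over $A$, then $C$ is free of finite rank over $A$. Note that Lemma \ref{lem0.2} by itself would only give finite generation, not freeness, which is why the stronger conclusion of Lemma \ref{lem0.5} is essential.
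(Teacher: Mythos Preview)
Your argument is correct and matches the paper's own proof: the author likewise cites the freeness of $\mathfrak{Z}_D$ over $\mathfrak{Z}_{\Omega}$ established in Lemma~\ref{lem0.5} and then invokes Theorem~\ref{thm0.1} to conclude via transitivity of freeness. You have simply spelled out the transitivity step in more detail than the paper does.
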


\begin{proof} It follows from the proof of Lemma \ref{lem0.5}, that $\mathfrak{Z}_{D}$ is free $\mathfrak{Z}_{\Omega}$ module of rank $|W(D)|$. Finally Theorem \ref{thm0.1} gives the desired result.
\end{proof}

\noindent \textbf{Remark.} Lemma \ref{lem0.3} above is essentially the same as Lemma 2.1 in \cite{MR1676870}.

\section{Specialization of a projective generator at maximal ideal of Bernstein centre}\label{H.3}

In this section we compute $\mathrm{c\text{--} Ind}_{J}^{G}\lambda \otimes_{\mathfrak{Z}_{\Omega}} \kappa(\mathfrak{m})$ for $\mathfrak{m} \in \Sp \mathfrak{Z}_{\Omega}$ a maximal ideal which belongs to some dense set of points in $\Sp \mathfrak{Z}_{\Omega}$, where $\Omega:=[M,\rho]_{G}$,  $D:=[M,\rho]_{M}$, and $\mathfrak{Z}_{\Omega}$ for the centre of category $\mathcal{R}^{\Omega}(G)$. This result is an improvement of Lemma 1.2 in \cite{MR1676870}. Let $(J, \lambda)$ be a Bushnell-Kutzko $\Omega$-type, such that $J$ is contained in a maximal compact subgroup $K$. The representation $\mathrm{c\text{--} Ind}_{J}^{G}\lambda$ is a projective generator of $\mathcal{R}^{\Omega}(G)$. 

Let $\overline{\chi}$ be any lift of $\mathrm{X} : \mathfrak{Z}_D \rightarrow E$ as in Lemma \ref{lem0.15}. Let now $\chi=\mathrm{X}|\mathfrak{Z}_{\Omega}$. We say that a character $\chi$ on $\mathfrak{Z}_{\Omega}$, obtained in this way, is induced from unramified character $\overline{\chi}$ of $M$. 

Once and for all we fix the following notation. For $(J, \lambda)$ an $\Omega$-type, as above, there exists a $D$-type $(J_{M}, \lambda_{M})$, such that :
\begin{itemize}
	\item[1.] $J_{M} = J \cap M$ and $\lambda_{M} = \lambda | J_{M}$.
	\item[2.] $J$ has an Iwahori decomposition $J \simeq (J \cap \overline{N})(J \cap M)(J \cap N)$ such that $\lambda |(J \cap \overline{N})$ and $\lambda |(J \cap N)$ are trivial. Here $\overline{N}$ is unipotent radical of opposite parabolic subgroup $\overline{P}=M\overline{N}$.
	\item[3.] For any  parabolic subgroup $P$ with Levi component $M$, there is an  there is an invertible element in $\mathcal{H}(G,\lambda)$ supported in $Jz_{P}J$, such that the element $z_{P}$ is in the centre of $M$ and has the following properties: 
	\begin{itemize}
		\item $z_P(J \cap N)z_P^{-1} \subset J \cap N$. 
		\item $z_P^{-1}(J \cap \overline{N})z_P \subset J \cap \overline{N}$.
		\item For any compact open subgroups $H_1$, $H_2$ of $N$ there is an integer $m\geq 0$, such that $z_P^{m} H_1 z_P^{-m} \subset H_2$.
		\item For any compact open subgroups $\overline{H}_2$, $\overline{H}_2$ of $\overline{N}$ there is an integer $m\geq 0$, such that $z_P^{m} \overline{H}_1 z_P^{-m} \subset \overline{H}_2$.
	\end{itemize}
	\item[4.] There is a subgroup $\widetilde{J_{M}}$ of $M$ compact modulo centre of $M$ such that $J_{M} = \widetilde{J_{M}} \cap K \cap M$.
	
	\item[5.] There is an extension $\widetilde{\lambda_{M}}$  of $\lambda_{M}$ to $\widetilde{J_{M}}$ such that  $\rho = \mathrm{c\text{--} Ind}_{\widetilde{J_{M}}}^{M} \widetilde{\lambda_{M}}$ (is irreducible supercupidal) and any $g \in M$ which intertwines $\lambda_{M}$ lies in $\widetilde{J_{M}}$.
	
\end{itemize}

\medskip
The theorem on existence of $G$-covers(section (8.3) \cite{MR1643417}) ensures the conditions 1, 2, 3. The conditions 4 and 5 follow from section (5.5)\cite{MR1643417}. Now we state and prove the main result of this section:

\begin{prop} \label{lem0.6}
	Let $\chi : \mathfrak{Z}_{\Omega} \rightarrow E$ be an algebra homomorphism corresponding to a maximal ideal $\mathfrak{m}=\mathrm{Ker}(\mathfrak{Z}_{\Omega} \xrightarrow{\chi} E)$ of $\mathfrak{Z}_{\Omega}$. Then there is a Zariski dense set $S$ in $\mathrm{Spec}(\mathfrak{Z}_{\Omega})$ such that:
	$$\mathrm{c\text{--} Ind}_{J}^{G}\lambda \otimes_{\mathfrak{Z}_{\Omega}} \kappa(\mathfrak{m}) \simeq P(\chi)^{\oplus |W(D)|}$$
	
	\noindent$\forall \mathfrak{m} \in S$,  where $P(\chi):=i_{\overline{P}}^{G}(\rho \otimes \overline{\chi})$ is an irreducible parabolic induction of a supercuspidal representation of a Levi subgroup of $G$ and $\overline{\chi}$ some character corresponding to the algebra homomorphism $\mathrm{X} : \mathfrak{Z}_{D} \rightarrow E$, as in Lemma \ref{lem0.15}, such that $\mathfrak{M} = \mathrm{Ker}(\mathrm{X})$ is a maximal ideal of $\mathfrak{Z}_{D}$ above $\mathfrak{m}$.
\end{prop}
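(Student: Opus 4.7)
The plan is to translate the question to the Hecke-algebra side via the equivalence $\mathfrak{M}_\lambda \colon \mathcal{R}^\Omega(G) \xrightarrow{\sim} \mathcal{H}(G,\lambda)\text{-Mod}$. Since $\mathfrak{M}_\lambda$ is $\mathfrak{Z}_\Omega$-linear and sends the projective generator $\cI_J^G \lambda$ to $\mathcal{H}(G,\lambda)$ as a module over itself, specialising at $\mathfrak{m}$ gives
\[ \mathfrak{M}_\lambda\bigl( \cI_J^G \lambda \otimes_{\mathfrak{Z}_\Omega} \kappa(\mathfrak{m}) \bigr) \simeq \mathcal{H}(G,\lambda) \otimes_{\mathfrak{Z}_\Omega} \kappa(\mathfrak{m}). \]
Take $S$ to be the dense set supplied by Lemma \ref{lem0.5}. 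For $\mathfrak{m} \in S$, that lemma decomposes $\mathfrak{Z}_D \otimes_{\mathfrak{Z}_\Omega} \kappa(\mathfrak{m})$ as a product of $|W(D)|$ copies of $\kappa(\mathfrak{M}_i)$, and freeness of $\mathcal{H}(G,\lambda)$ over $\mathfrak{Z}_D$ (Theorem \ref{thm0.1}) distributes the tensor product:
\[ \mathcal{H}(G,\lambda) \otimes_{\mathfrak{Z}_\Omega} \kappa(\mathfrak{m}) \simeq \bigoplus_{i=1}^{|W(D)|} \mathcal{H}(G,\lambda) \otimes_{\mathfrak{Z}_D} \kappa(\mathfrak{M}_i). \]

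The next step is to identify each factor $\mathcal{H}(G,\lambda) \otimes_{\mathfrak{Z}_D} \kappa(\mathfrak{M}_i)$ with $\mathfrak{M}_\lambda(P(\chi_i))$, where $P(\chi_i) := i_{\overline{P}}^G(\rho \otimes \overline{\chi}_i)$ and $\overline{\chi}_i$ is a lift of $\mathfrak{M}_i$ furnished by Lemma \ref{lem0.15}. For this I invoke Bushnell--Kutzko cover theory: the axioms of a cover, encoded in properties 1--5 listed before the proposition, supply an embedding $t_{\overline{P}} \colon \mathcal{H}(M,\lambda_M) \hookrightarrow \mathcal{H}(G,\lambda)$ (with $\mathcal{H}(M,\lambda_M) \simeq \mathfrak{Z}_D$) and an isomorphism
\[ \mathfrak{M}_\lambda(i_{\overline{P}}^G \sigma) \simeq \mathfrak{M}_{\lambda_M}(\sigma) \otimes_{\mathcal{H}(M,\lambda_M)} \mathcal{H}(G,\lambda), \qquad \sigma \in \mathcal{R}^D(M). \]
Applied to $\sigma = \rho \otimes \overline{\chi}_i$, whose image $\mathfrak{M}_{\lambda_M}(\sigma)$ is the one-dimensional $\mathfrak{Z}_D$-module $\kappa(\mathfrak{M}_i)$, this produces the desired match. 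Transporting back through $\mathfrak{M}_\lambda$ yields $\cI_J^G \lambda \otimes_{\mathfrak{Z}_\Omega} \kappa(\mathfrak{m}) \simeq \bigoplus_i P(\chi_i)$.

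It then remains to shrink $S$ to a still dense subset on which every $P(\chi_i)$ is irreducible, and to check that all of them are mutually isomorphic. Irreducibility of $i_{\overline{P}}^G(\rho \otimes \overline{\chi})$ fails only on a proper Zariski-closed subset of $\Sp \mathfrak{Z}_D$; since $\mathfrak{Z}_D$ is finite over $\mathfrak{Z}_\Omega$, the image of that subset is closed in $\Sp \mathfrak{Z}_\Omega$, so removing it from $S$ preserves Zariski density. The $\mathfrak{M}_i$ form a single $W(D)$-orbit above $\mathfrak{m}$, because $\mathfrak{Z}_\Omega = \mathfrak{Z}_D^{W(D)}$ by Lemma \ref{lem0.1}; consequently the $\rho \otimes \overline{\chi}_i$ are $W(D)$-conjugate representations of $M$, so their parabolic inductions share all irreducible constituents and, once irreducible, must coincide with $P(\chi)$. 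This delivers the claimed $P(\chi)^{\oplus |W(D)|}$.

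The main obstacle is the parabolic-induction/Hecke-induction compatibility invoked in the second paragraph: proving it requires unwinding the cover axioms, in particular the existence of the invertible element of $\mathcal{H}(G,\lambda)$ supported on $J z_P J$, in order to realise $i_{\overline{P}}^G$ on the module side as induction along $t_{\overline{P}}$. Once this identification is secured, the rest of the argument is essentially commutative-algebra bookkeeping, powered by Lemma \ref{lem0.5} and Theorem \ref{thm0.1}.
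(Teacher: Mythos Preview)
Your argument is correct and runs parallel to the paper's, but on the Hecke-module side rather than the representation side. The paper never passes through $\mathfrak{M}_\lambda$; instead it uses directly the isomorphism $i_{\overline{P}}^G(\cI_{J_M}^M \lambda_M)\simeq \cI_J^G\lambda$ (this is the representation-theoretic avatar of the cover compatibility you invoke), applies exactness of $i_{\overline{P}}^G$, and then computes $\cI_{J_M}^M\lambda_M\otimes_{\mathfrak{Z}_D}\kappa(\mathfrak{M}_i)\simeq \rho\otimes\overline{\chi}_i$ by an explicit calculation with $\widetilde{J_M}/J_M$. Your version replaces that explicit step by the one-line observation that $\mathfrak{M}_{\lambda_M}(\rho\otimes\overline{\chi}_i)$ is the character $\kappa(\mathfrak{M}_i)$ of the commutative algebra $\mathfrak{Z}_D$, which is slicker; the price is that the ``main obstacle'' you flag --- the identification $\mathfrak{M}_\lambda\circ i_{\overline{P}}^G \simeq \mathcal{H}(G,\lambda)\otimes_{\mathcal{H}(M,\lambda_M)}\mathfrak{M}_{\lambda_M}$ --- is exactly the content of the isomorphism the paper quotes from \cite{MR1670599}, so the two routes are formally equivalent under $\mathfrak{M}_\lambda$.

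For irreducibility the paper is more concrete: it writes down, via the Bernstein--Zelevinsky linkage criterion, the explicit function $\Delta=\prod_{(k',i')\ne(k,i)}(X_{k',i'}-qX_{k,i})(X_{k,i}-qX_{k',i'})$ whose non-vanishing cuts out the dense set $S$. Your topological argument (reducibility locus closed in $\Sp\mathfrak{Z}_D$, hence its image closed in $\Sp\mathfrak{Z}_\Omega$ by finiteness) is valid and more conceptual, though it requires knowing that the reducibility locus is \emph{proper}, which ultimately still comes from Bernstein--Zelevinsky. Likewise your claim that the $\mathfrak{M}_i$ form a single $W(D)$-orbit is already built into the proof of Lemma~\ref{lem0.5} (where they are indexed by $w\in W(D)$), so you may simply cite that rather than appeal to general invariant theory.
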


\begin{proof}  According to section 1.5 \cite[section 1.5]{MR1670599}.  We have the following isomorphism:
	
	\begin{equation} \label{eq:2}
	i_{\overline{P}}^{G}(\mathrm{c\text{--} Ind}_{J_{M}}^{M} \lambda_{M}) \simeq \mathrm{c\text{--} Ind}_{J}^{G} \lambda.
	\end{equation}
	
	\noindent The functor $i_{\overline{P}}^{G}$ is exact, hence:
	$$\mathrm{c\text{--} Ind}_{J}^{G} \lambda \otimes_{\mathfrak{Z}_{\Omega}} \kappa(\mathfrak{m}) \simeq i_{\overline{P}}^{G}(\mathrm{c\text{--} Ind}_{J_{M}}^{M} \lambda_{M} \otimes_{\mathfrak{Z}_{\Omega}} \kappa(\mathfrak{m}))$$
	
	\noindent by the previous isomorphism of representations. Let's find a decomposition of $\mathrm{c\text{--} Ind}_{J_{M}}^{M} \lambda_{M} \otimes_{\mathfrak{Z}_{\Omega}} \kappa(\mathfrak{m})$. Indeed
	$$\mathrm{c\text{--} Ind}_{J_{M}}^{M} \lambda_{M} =\mathrm{c\text{--} Ind}_{\widetilde{J_{M}}}^{M} \mathrm{ c\text{--}Ind}_{J_{M}}^{\widetilde{J_{M}}} \lambda_{M} = \mathrm{c\text{--} Ind}_{\widetilde{J_{M}}}^{M} (\widetilde{\lambda_{M}} \otimes \mathrm{ c\text{--}Ind}_{J_{M}}^{\widetilde{J_{M}}} 1).$$
	
	Let us now describe the action of $\mathfrak{Z}_D$. From Frobenius reciprocity follows a Hecke algebras isomorphism $\mathcal{H}(M, \lambda_{M}) \simeq \mathcal{H}(\widetilde{J_{M}}, \widetilde{\lambda_{M}})$ because any $g \in M$ that intertwines $\lambda_{M}$ lies in $\widetilde{J_{M}}$. Since $\widetilde{J_{M}}/J_{M}$ is free abelian group, $\mathcal{H}(\widetilde{J_{M}}, \widetilde{\lambda_{M}})$ is commutative, and we have an isomorphism $\mathcal{H}(\widetilde{J_{M}}, \widetilde{\lambda_{M}}) \simeq E[\widetilde{J_{M}}/J_{M}]$. Therefore we have:
	$$\mathfrak{Z}_{D} \simeq \mathcal{H}(M, \lambda_{M}) \simeq \mathcal{H}(\widetilde{J_{M}}, \widetilde{\lambda_{M}}) \simeq E[\widetilde{J_{M}}/J_{M}] \simeq \mathrm{End}_{\widetilde{J_{M}}}(\mathrm{ c\text{--}Ind}_{J_{M}}^{\widetilde{J_{M}}} 1).$$
	
	The representation $\mathrm{ c\text{--}Ind}_{J_{M}}^{\widetilde{J_{M}}} 1$ is naturally isomorphic to the space of functions on $\widetilde{J_{M}}$ which are left invariant by $J_{M}$. We have the following canonical isomorphism $\mathrm{ c\text{--}Ind}_{J_{M}}^{\widetilde{J_{M}}} 1 \simeq E[\widetilde{J_{M}}/J_{M}]$. This shows that $\mathrm{ c\text{--}Ind}_{J_{M}}^{\widetilde{J_{M}}} 1$ is free $\mathfrak{Z}_{D}$-module of rank 1.
	
	\medskip
	
	Since $\mathrm{c\text{--} Ind}_{J_{M}}^{M} \lambda_{M} \otimes_{\mathfrak{Z}_{\Omega}} \kappa(\mathfrak{m}) \simeq \mathrm{c\text{--} Ind}_{J_{M}}^{M} \lambda_{M} \otimes_{\mathfrak{Z}_{D}}(\mathfrak{Z}_{D} \otimes_{\mathfrak{Z}_{\Omega}} \kappa(\mathfrak{m}))$, it is enough to find the decomposition of $\mathrm{c\text{--} Ind}_{J_{M}}^{M} \lambda_{M} \otimes_{\mathfrak{Z}_{D}} \kappa(\mathfrak{M}_{j})$ ($1 \leq j \leq |W(D)|$), because of Lemma \ref{lem0.5}. The functor $\mathrm{c\text{--} Ind}_{\widetilde{J_{M}}}^{M}$ is exact, therefore using projection formula:
	
	$$\mathrm{c\text{--} Ind}_{J_{M}}^{M} \lambda_{M} \otimes_{\mathfrak{Z}_{D}} \kappa(\mathfrak{M}_{i}) = \mathrm{c\text{--} Ind}_{\widetilde{J_{M}}}^{M} (\widetilde{\lambda_{M}} \otimes (\mathrm{ c\text{--}Ind}_{J_{M}}^{\widetilde{J_{M}}} 1)\otimes_{\mathfrak{Z}_{D}} \kappa(\mathfrak{M}_{j})).$$
	
	Let's express $\mathrm{ c\text{--}Ind}_{J_{M}}^{\widetilde{J_{M}}} 1\otimes_{\mathfrak{Z}_{D}} \kappa(\mathfrak{M}_{j})$ in terms of more suitable data. Let's drop the index $j$ temporarily and write $\mathfrak{M}:=\mathfrak{M}_{j}$. 
	
	Let now $\mathfrak{M}$ be a maximal ideal of $\mathfrak{Z}_{D}$ above some maximal ideal $\mathfrak{m} \in \Sp \mathfrak{Z}_{\Omega}$. We may always assume that $M=\prod_{i=1}^{s} M_{i}$ and $\rho = \bigotimes_{i=1}^{s} \rho_{i}$, where $M_{i}=GL_{n_{i}}(F)^{e_{i}}$ and $\rho_{i}\simeq \pi_{i} \otimes\ldots\otimes \pi_{i}$ ($e_{i}$ times) is a supercuspidal representation of $M_{i}$ and $\pi_{i}$ is a supercuspidal representation of $GL_{n_{i}}(F)$. Define $G_{i} = GL_{n_{i}e_{i}}(F)$, $\Omega_{i} = [M_{i}, \rho_{i}]_{G_{i}}$, $D_{i} = [M_{i}, \rho_{i}]_{M_{i}}$. Then: 
	$$\mathfrak{Z}_D\simeq E[X_{1,1}^{\pm 1}, \ldots, X_{e_{1},1}^{\pm 1},\ldots,X_{1,s}^{\pm 1}, \ldots, X_{e_{s},s}^{\pm 1}].$$
	
	\noindent Let
	$$\mathfrak{m}=(s_{1,i}-a_{1,i},\ldots,s_{e_{i},i}-a_{e_{i},i})_{1\leq i \leq s},$$
	
	\noindent where $s_{k,i}$ are elementary symmetric functions in variables $X_{1,i}$,\ldots,$X_{e_{i},i}$ and $a_{k,i} \in E$. Then
	$$\mathfrak{M}=(X_{1,i}-\alpha_{1,i},\ldots,X_{e_{i},i}-\alpha_{e_{i},i})_{1\leq i \leq s},$$
	
	\noindent where  for each $i$, $\alpha_{1,i}$,\ldots, $\alpha_{e_{i},i}$ are the $e_{i}$ distinct roots of  polynomial $X^{e_{i}}+\sum\limits_{k=1}^{e_{i}}(-1)^{k}a_{k}X^{e_{i}-k}$. We assumed that the extension $E$ is algebraically closed, so all those roots lie in $E$. Let $\overline{\chi}:=\overline{\chi}_j$, the unramified character which corresponds to $\mathfrak{M}$. Then $\overline{\chi} = \bigotimes_{i=1}^{s} \overline{\psi}_{i}$, where $\overline{\psi}_{i}$ are unramified characters of $M_{i} = GL_{n_{i}}(F)^{e_{i}}$, such that $\overline{\psi}_{i} =\bigotimes_{k=1}^{e_{i}} \overline{\psi}_{k,i}$ and if $\varpi$ denotes the uniformizer of $F$ and $I$ the identity matrix of $GL_{n_{i}}(F)$, $\overline{\psi}_{k,i}(\varpi.I)=\alpha_{k,i}$.
	
	Since $\mathrm{ c\text{--}Ind}_{J_{M}}^{\widetilde{J_{M}}} 1$ is a free $\mathfrak{Z}_D$-module of rank one, we have an isomorphism of $\mathfrak{Z}_D$-modules: 
	
	\begin{equation} \label{eq:4}
	\mathrm{ c\text{--}Ind}_{J_{M}}^{\widetilde{J_{M}}} 1\otimes_{\mathfrak{Z}_{D}} \kappa(\mathfrak{M}_{j}) \simeq \mathfrak{Z}_{D} / \mathrm{Ker}(\mathrm{X}_{j}) \simeq \mathrm{Im}(\mathrm{X}_{j})
	\end{equation}
	
	\noindent where $\mathfrak{M}_{i} = \mathrm{Ker}(\mathrm{X}_{j})$ and the algebra homomorphism $\mathrm{X}_{j} : \mathfrak{Z}_{D} \rightarrow E$ is such that the unramified character $\overline{\chi}_{j}$ of $M$ maps to $X_j$ as in Lemma \ref{lem0.15}. It follows from previous description of the maximal ideal $\mathfrak{M}:=\mathfrak{M}_{j}$ and the character $\overline{\chi}_{j}$ that:
	$$\mathrm{Im}(\mathrm{X}_{j}) = \mathrm{Im}(\overline{\chi}_{j}).$$
	
	\noindent Then from (\ref{eq:4}) follows that the representation $\mathrm{ c\text{--}Ind}_{J_{M}}^{\widetilde{J_{M}}} 1\otimes_{\mathfrak{Z}_{D}} \kappa(\mathfrak{M}_{j})$ is one dimensional and also we have an isomorphism of $\widetilde{J_{M}}$-representations:
	$$\mathrm{ c\text{--}Ind}_{J_{M}}^{\widetilde{J_{M}}} 1 \otimes_{\mathfrak{Z}_{D}} \kappa(\mathfrak{M}_{j}) \simeq \overline{\chi}_{j}|\widetilde{J_{M}}.$$
	
	\noindent Now using projection formula and previous isomorphism we may write:
	$$\mathrm{c\text{--} Ind}_{\widetilde{J_{M}}}^{M} (\widetilde{\lambda_{M}} \otimes (\mathrm{ c\text{--}Ind}_{J_{M}}^{\widetilde{J_{M}}} 1)\otimes_{\mathfrak{Z}_{D}} \kappa(\mathfrak{M}_{i})) \simeq \mathrm{c\text{--} Ind}_{\widetilde{J_{M}}}^{M} (\widetilde{\lambda_{M}} \otimes \overline{\chi}_{i}|\widetilde{J_{M}}) \simeq \rho \otimes \overline{\chi}_{j},$$
	
	\noindent because $\rho = \mathrm{c\text{--} Ind}_{\widetilde{J_{M}}}^{M} \widetilde{\lambda_{M}}$. So that we have
	
	\begin{equation} \label{eq:3}
	\mathrm{c\text{--} Ind}_{J_{M}}^{M} \lambda_{M} \otimes_{\mathfrak{Z}_{D}} \kappa(\mathfrak{M}_{i}) \simeq \rho \otimes \overline{\chi}_{i}
	\end{equation}

	\noindent Using (\ref{eq:2}) and (\ref{eq:3}) we get:
	$$ \mathrm{c\text{--} Ind}_{J}^{G} \lambda \otimes_{\mathfrak{Z}_{\Omega}} \kappa(\mathfrak{m}) \simeq i_{\overline{P}}^{G}(\mathrm{c\text{--} Ind}_{J_{M}}^{M} \lambda_{M} \otimes_{\mathfrak{Z}_{D}}(\mathfrak{Z}_{D} \otimes_{\mathfrak{Z}_{\Omega}} \kappa(\mathfrak{m})))$$ $$\simeq i_{\overline{P}}^{G}(\bigoplus_{j=1}^{|W(D)|} \mathrm{c\text{--} Ind}_{J_{M}}^{M} \lambda_{M} \otimes_{\mathfrak{Z}_{D}} \kappa(\mathfrak{M}_{j})) \simeq \bigoplus_{j=1}^{|W(D)|} i_{\overline{P}}^{G}(\rho \otimes \overline{\chi}_{j}),$$
	
	\noindent where the maximal ideal $\mathfrak{m}$ belongs to open dense set $S'$, defined by $S':=\Spm(\mathfrak{Z}_{\Omega}[\frac{1}{d}])$ as in Lemma \ref{lem0.5}.
	
	Let's now prove that all the $i_{\overline{P}}^{G}(\rho \otimes \overline{\chi}_{j})$ are irreducible on the subset $S:=\Spm(\mathfrak{Z}_{\Omega}[\frac{1}{d\Delta}])$ of $S'$, with $\Delta := \prod_{(k',i')\neq (k,i)}(X_{k',i'}-qX_{k,i})(X_{k,i}-qX_{k',i'})$, for all $1\leq k ,k'\leq e_{i}$ and $1 \leq i, i' \leq s$, and $q$ is the cardinality of the residue field of $F$. Again by the Lemma \ref{lem0.13} the set $S$ is dense. Let $\mathfrak{M}$ a maximal ideal of $\mathfrak{Z}_{D}$ above $\mathfrak{m} \in S$ corresponding to $\overline{\chi}_{j}$. With the same notation as above, we have then:
	$$\rho \otimes \overline{\chi}_{j} = \bigotimes_{i=1}^{s} \bigotimes_{k=1}^{e_{i}}(\pi_{i}\otimes\overline{\psi}_{k,i}).$$
	
	By definition of representations $\pi_{i}$, there is no integer $m$ such that $\pi_{i} \simeq \pi_{j} \otimes |\det|^{m}$(for any $i\neq j$) since all the $\alpha_{k,i}$ are distinct (for a fixed $i$) and $\alpha_{k,i} \alpha_{k',i'}^{-1} \neq q^{\pm 1} $(for any couples $(k',i')\neq (k,i)$). Then the segments $\Delta_{k,i} = \pi_{i}\otimes\overline{\psi}_{k,i}$ are not linked pairwise for any $k$ and $i$. Then it follows by Bernstein-Zelevisky classification \cite{MR584084}, that $i_{\overline{P}}^{G}(\rho \otimes \overline{\chi})$ is irreducible.
	
	We have just proved that if $\overline{\chi}$ is the unramified character of which corresponds to a maximal ideal $\mathfrak{M}$ of $\mathfrak{Z}_{D}$ above $\mathfrak{m} \in S$, then $i_{\overline{P}}^{G}(\rho \otimes \overline{\chi})$ is irreducible. By construction all the maximal ideals $\mathfrak{M}_{i}$ (which all lie above $\mathfrak{m} \in S$) are pairwise conjugated by some element $w \in W(D)$, so are the characters $\overline{\chi}_{i}$. Then for $\mathfrak{m} \in S$ all $i_{\overline{P}}^{G}(\rho \otimes \overline{\chi}_{i})$ are irreducible.
	
	Let $\mathfrak{m} \in S$, it follows from Frobenius reciprocity that $\mathrm{Hom}_{G}(i_{\overline{P}}^{G}(\rho \otimes \overline{\chi}_{i}),i_{\overline{P}}^{G}(\rho \otimes \overline{\chi}_{j})) \neq 0$, for all $1\leq i \leq |W(D)|$ and $1\leq j \leq |W(D)|$, because there is a $w_{i,j} \in W(D)$ such that $\overline{\chi}_{i}=\overline{\chi}_{j}^{w_{i,j}}$. Then for all $1\leq i \leq |W(D)|$, $1\leq j \leq |W(D)|$, $i_{\overline{P}}^{G}(\rho \otimes \overline{\chi}_{i}) \simeq i_{\overline{P}}^{G}(\rho \otimes \overline{\chi}_{j})$, because all these representations are irreducible on $S$. Write $P(\chi):=i_{\overline{P}}^{G}(\rho \otimes \overline{\chi}_{i})$, for some integer $i$.
	
	Then on open dense set $S$ we get :
	$$ \mathrm{c\text{--} Ind}_{J}^{G} \lambda \otimes_{\mathfrak{Z}_{\Omega}} \kappa(\mathfrak{m}) \simeq P(\chi)^{\oplus |W(D)|}.$$
\end{proof}

\section{Intertwining of representations}\label{H.4}

In this section we collect some useful lemmas. Recall that $\Omega:=[M,\rho]_{G}$, where $\rho$ is a supercuspidal representation of $M$,  $D:=[M,\rho]_{M}$, $\mathfrak{Z}_{\Omega}$ is the centre of category $\mathcal{R}^{\Omega}(G)$, and $\mathfrak{Z}_{D}$ is the centre of category $\mathcal{R}^{D}(M)$.

\begin{lemma} \label{lem0.8} Let $\sigma$ be an irreducible $K$-representation. With the notations of \textbf{Proposition \ref{lem0.6}}, we have:
	
	$$\mathrm{Hom}_{G}(\mathrm{c\text{--} Ind}_{K}^{G}\sigma, P(\chi))= \mathrm{Hom}_{G}(\mathrm{c\text{--} Ind}_{K}^{G}\sigma \otimes_{\mathfrak{Z}_{\Omega}} \kappa(\mathfrak{m}), P(\chi)),$$
	
	\noindent where $P(\chi):=i_{\overline{P}}^{G}(\rho \otimes \overline{\chi})$ is an irreducible parabolic induction of a supercuspidal representation $\rho \otimes \overline{\chi}$ of a Levi subgroup of $G$ and $\overline{\chi}$ some character corresponding to the algebra homomorphism $\mathrm{X} : \mathfrak{Z}_{D} \rightarrow E$, as in Lemma \ref{lem0.15}, such that $\mathfrak{M} = \mathrm{Ker}(\mathrm{X})$ is a maximal ideal of $\mathfrak{Z}_{D}$ above $\mathfrak{m}$.
\end{lemma}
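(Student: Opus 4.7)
The strategy is to show that $\mathfrak{Z}_{\Omega}$ acts on $P(\chi)$ through the character $\chi$, so that $\mathfrak{m}$ annihilates $P(\chi)$; once this is established, the lemma follows from a standard base change argument.

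First, $P(\chi) = i_{\overline{P}}^{G}(\rho \otimes \overline{\chi})$ lies in $\mathcal{R}^{\Omega}(G)$, because $\rho \otimes \overline{\chi}$ is an irreducible supercuspidal representation of $M$ and $\overline{\chi}$ is unramified, so $[M, \rho \otimes \overline{\chi}]_G = [M,\rho]_G = \Omega$. By Proposition~\ref{lem0.6}, $P(\chi)$ is irreducible, hence $\mathfrak{Z}_{\Omega}$, being the ring of endomorphisms of the identity functor of $\mathcal{R}^{\Omega}(G)$, acts on $P(\chi)$ through an algebra homomorphism $\mathfrak{Z}_{\Omega} \to E$. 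Tracing through the construction, $\mathfrak{Z}_D$ acts on the supercuspidal $\rho \otimes \overline{\chi}$ via the character of Lemma~\ref{lem0.15} attached to $\overline{\chi}$, and parabolic induction is compatible with the inclusion $\mathfrak{Z}_{\Omega} = \mathfrak{Z}_{D}^{W(D)} \hookrightarrow \mathfrak{Z}_{D}$ given by Lemma~\ref{lem0.1}; hence $\mathfrak{Z}_{\Omega}$ acts on $P(\chi)$ precisely via $\chi$, and in particular $\mathfrak{m} \cdot P(\chi) = 0$.

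The rest of the proof is formal. Any $G$-morphism $f : \cI_K^G \sigma \to P(\chi)$ is automatically $\mathfrak{Z}_{\Omega}$-linear, because elements of the Bernstein centre are natural transformations of the identity functor and therefore commute with every $G$-morphism. For $z \in \mathfrak{m}$ and $v \in \cI_K^G \sigma$ this yields $f(z \cdot v) = z \cdot f(v) = 0$, so $f$ vanishes on $\mathfrak{m} \cdot \cI_K^G \sigma$ and factors uniquely through $\cI_K^G \sigma / \mathfrak{m} \cdot \cI_K^G \sigma \simeq \cI_K^G \sigma \otimes_{\mathfrak{Z}_{\Omega}} \kappa(\mathfrak{m})$. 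Conversely, every $G$-morphism out of the specialization pulls back along the canonical surjection to a $G$-morphism out of $\cI_K^G \sigma$, and these two operations are mutually inverse, giving the claimed equality. The only real content is the identification of the central character of $P(\chi)$ with $\chi$, which amounts to unwinding the explicit description $\mathfrak{Z}_D \simeq E[T/M^{\circ}]$ recalled at the start of Section~\ref{H.2}; no serious obstacle is expected beyond this bookkeeping.
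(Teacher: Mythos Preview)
Your proof is correct and follows the same approach as the paper: the paper's proof consists of the single observation that $\mathfrak{m}$ kills $P(\chi)$, and you have simply spelled out both why this holds (via the central character) and why it implies the factorization. Your version is more detailed than necessary --- irreducibility of $P(\chi)$ is already part of the hypothesis, and the precise identification of the central character with $\chi$ is not strictly needed once one knows $\mathfrak{m}\cdot P(\chi)=0$ --- but nothing is wrong.
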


\begin{proof} Observe that the maximal ideal $\mathfrak{m}=\mathrm{Ker}(\mathfrak{Z}_{\Omega} \xrightarrow{\chi} E)$  kills $P(\chi)$. The assertion follows.
\end{proof}

\begin{lemma} \label{lem0.9} Let $\sigma_{1}$ and $\sigma_{2}$ be two irreducible $K$-representations. We have the following isomorphisms:
	\begin{align}
	& \mathrm{Hom}_{G}(\mathrm{c\text{--} Ind}_{K}^{G}\sigma_{1}, \mathrm{c\text{--}Ind}_{K}^{G}\sigma_{2}) \otimes_{\mathfrak{Z}_{\Omega}} \kappa(\mathfrak{m}) \nonumber \\ 
	& \simeq \mathrm{Hom}_{G}(\mathrm{c\text{--} Ind}_{K}^{G}\sigma_{1}\otimes_{\mathfrak{Z}_{\Omega}} \kappa(\mathfrak{m}), \mathrm{c\text{--}Ind}_{K}^{G}\sigma_{2}\otimes_{\mathfrak{Z}_{\Omega}} \kappa(\mathfrak{m}))\nonumber \\ 
	&  \simeq \mathrm{Hom}_{G}(\mathrm{c\text{--} Ind}_{K}^{G}\sigma_{1}, \mathrm{c\text{--}Ind}_{K}^{G}\sigma_{2}\otimes_{\mathfrak{Z}_{\Omega}} \kappa(\mathfrak{m})).\nonumber
	\end{align}	
\end{lemma}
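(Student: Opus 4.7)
The plan is to establish the two stated isomorphisms separately.

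First, I record the structural input on $\sigma_1$: since $K$ is compact open in $G$ and $\sigma_1$ is a finite-dimensional smooth $K$-representation, $\sigma_1$ is projective in $\mathcal{R}(K)$ (we are in characteristic zero), so $\mathrm{c\text{--}Ind}_K^G\sigma_1$ is projective in $\mathcal{R}(G)$. Equivalently, by Frobenius reciprocity $\mathrm{Hom}_G(\mathrm{c\text{--}Ind}_K^G\sigma_1, V) \simeq \mathrm{Hom}_K(\sigma_1, V|_K)$, the functor $\mathrm{Hom}_G(\mathrm{c\text{--}Ind}_K^G\sigma_1, -)$ is exact on $\mathcal{R}(G)$ and commutes with direct sums. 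This is the only fact about $\sigma_1$ used in the proof.

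The equality of the second and third lines is purely formal. The target $\mathrm{c\text{--}Ind}_K^G\sigma_2 \otimes_{\mathfrak{Z}_\Omega}\kappa(\mathfrak{m})$ is by construction annihilated by $\mathfrak{m}$, so any $G$-equivariant map $\phi$ from $\mathrm{c\text{--}Ind}_K^G\sigma_1$ into it kills $\mathfrak{m}\cdot \mathrm{c\text{--}Ind}_K^G\sigma_1$ and factors uniquely through the quotient $\mathrm{c\text{--}Ind}_K^G\sigma_1 \otimes_{\mathfrak{Z}_\Omega}\kappa(\mathfrak{m})$; precomposition with the canonical surjection gives the converse, identifying lines two and three.

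For the equality of lines one and three I invoke the Noetherianity of $\mathfrak{Z}_\Omega$, which is a finitely generated $E$-algebra ($\mathfrak{Z}_D$ is of finite type over $E$ as remarked before Lemma~\ref{lem0.2}, and $\mathfrak{Z}_\Omega = \mathfrak{Z}_D^{W(D)}$ by Lemma~\ref{lem0.1}). Picking generators $z_1,\dots,z_n$ of $\mathfrak{m}$ and writing $V:=\mathrm{c\text{--}Ind}_K^G\sigma_2$, I obtain a right-exact sequence of smooth $G$-representations
\begin{equation*}
V^{\oplus n} \xrightarrow{\ (v_i)\mapsto \sum_i z_i v_i\ } V \longrightarrow V\otimes_{\mathfrak{Z}_\Omega}\kappa(\mathfrak{m}) \longrightarrow 0.
\end{equation*}
Applying the exact functor $\mathrm{Hom}_G(\mathrm{c\text{--}Ind}_K^G\sigma_1,-)$, which also commutes with finite direct sums, produces a right-exact sequence of $\mathfrak{Z}_\Omega$-modules whose first map is again given by multiplication by the $z_i$; its cokernel is $\mathrm{Hom}_G(\mathrm{c\text{--}Ind}_K^G\sigma_1,V)\otimes_{\mathfrak{Z}_\Omega}\kappa(\mathfrak{m})$ and is canonically identified with $\mathrm{Hom}_G(\mathrm{c\text{--}Ind}_K^G\sigma_1, V\otimes_{\mathfrak{Z}_\Omega}\kappa(\mathfrak{m}))$, as required.

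The argument is entirely formal once projectivity of $\mathrm{c\text{--}Ind}_K^G\sigma_1$ and finite generation of $\mathfrak{m}$ are in hand, so I do not expect a genuine obstacle; the only conceptual point worth highlighting is that although $-\otimes_{\mathfrak{Z}_\Omega}\kappa(\mathfrak{m})$ is merely right-exact, this is enough because $\mathrm{Hom}_G$ out of a projective object is exact.
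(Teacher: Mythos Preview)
Your proof is correct and essentially the paper's argument: both exploit projectivity of $\cI_K^G\sigma_1$ and a right-exact presentation to commute $\mathrm{Hom}_G(\cI_K^G\sigma_1,-)$ past $-\otimes_{\mathfrak{Z}_\Omega}\kappa(\mathfrak{m})$, with the identification of the second and third lines handled by the same $\mathfrak{m}$-annihilation observation. The only cosmetic variation is that the paper presents the target $B$ by free $\mathfrak{Z}_\Omega$-modules (using that $\cI_K^G\sigma_1$ is finitely generated so $\mathrm{Hom}$ commutes with arbitrary direct sums), whereas you present $\kappa(\mathfrak{m})$ via finitely many generators of $\mathfrak{m}$ (using Noetherianity of $\mathfrak{Z}_\Omega$), which lets you get away with only finite direct sums.
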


\begin{proof} Since $\cI_K^G \sigma$ is a projective and is also finitely generated as a $G$-representation, we have an isomorphism:	
	\[ \mathrm{Hom}_G(\cI_K^G \sigma, B) \otimes_{\mathfrak{Z}_{\Omega}} \kappa(\mathfrak{m}) \simeq  \mathrm{Hom}_G(\cI_K^G \sigma, B \otimes_{\mathfrak{Z}_{\Omega}} \kappa(\mathfrak{m})),\]
	\noindent for any $G$-representation $B$. Indeed if $B$ is a free $\mathfrak{Z}_{\Omega}$-module, then the assertion is clear. For general $B$, take a presentation of $B$ as $\mathfrak{Z}_{\Omega}$-module and use the previous case. The other isomorphism is proven the same way as in the lemma above.
\end{proof}


\section{Computation of multiplicities}\label{H.5}

Recall the decomposition (\ref{decomp2}) of $\mathrm{Ind}_J^K \lambda$. Then the multiplicity of an irreducible $K$-representation $\sigma$ appearing in $\mathrm{Ind}_J^K \lambda$ is $m_{\sigma}:=\dim_{E} \mathrm{Hom}_{K}(\mathrm{Ind}_{J}^{K} \lambda, \sigma)$. Recall that $\widehat{K}$ denotes the set of all isomorphism classes of irreducible representations of $K$. Define $I_{\sigma}:=\mathrm{c\text{--} Ind}_{K}^{G}\sigma$. Now we can deduce the following result from Proposition \ref{lem0.6}:

\begin{coro} \label{coro1}
	Let $\sigma \in \widehat{K}$, viewed as $K$-representation,  and $\chi : \mathfrak{Z}_{\Omega} \rightarrow E$ a algebra homomorphism corresponding to maximal ideal $\mathfrak{m}=\mathrm{Ker}(\mathfrak{Z}_{\Omega} \xrightarrow{\chi} E)$ of $\mathfrak{Z}_{\Omega}$. Then there is an integer $n_{\sigma}$ and a Zariski dense set $S$ in $\mathrm{Spec}(\mathfrak{Z}_{\Omega})$ such that:
	$$\mathrm{c\text{--} Ind}_{K}^{G}\sigma \otimes_{\mathfrak{Z}_{\Omega}} \kappa(\mathfrak{m}) \simeq P(\chi)^{\oplus n_{\sigma}}$$
	
	\noindent $\forall \mathfrak{m} \in S$,  where $P(\chi):=i_{\overline{P}}^{G}(\rho \otimes \overline{\chi})$ an irreducible parabolic induction of a supercuspidal representation of a Levi subgroup of $G$ and $\overline{\chi}$ some unramified character corresponding to the algebra homomorphism $\mathrm{X} : \mathfrak{Z}_{D} \rightarrow E$, as in Lemma \ref{lem0.15}, such that $\mathfrak{M} = \mathrm{Ker}(\mathrm{X})$ is a maximal ideal of $\mathfrak{Z}_{D}$ above $\mathfrak{m}$.
	
	Moreover we have the following relations of multiplicities :
	$$\sum\limits_{\sigma \in \widehat{K}} m_{\sigma}n_{\sigma} =|W(D)|$$
	$$\sum\limits_{\sigma \in \widehat{K}} m_{\sigma}^{2} =|W(D)|.$$
\end{coro}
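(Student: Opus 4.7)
My plan is to derive the entire statement from the decomposition (\ref{eq:1}) together with Proposition~\ref{lem0.6}, by specialization and dimension bookkeeping. First I would take the decomposition $\mathrm{c\text{--} Ind}_J^G\lambda \simeq \bigoplus_{\sigma\in\widehat K}(\mathrm{c\text{--} Ind}_K^G\sigma)^{\oplus m_\sigma}$ from (\ref{eq:1}) and tensor it over $\mathfrak{Z}_\Omega$ with $\kappa(\mathfrak{m})$, for $\mathfrak{m}$ in the Zariski-dense set $S$ supplied by Proposition~\ref{lem0.6}. This gives
\[
P(\chi)^{\oplus |W(D)|} \;\simeq\; \bigoplus_{\sigma\in\widehat K}\bigl(\mathrm{c\text{--} Ind}_K^G\sigma\otimes_{\mathfrak{Z}_\Omega}\kappa(\mathfrak{m})\bigr)^{\oplus m_\sigma}.
\]
Because $P(\chi)$ is irreducible (Proposition~\ref{lem0.6}), Schur's lemma ensures that every direct summand of $P(\chi)^{\oplus|W(D)|}$ is of the form $P(\chi)^{\oplus N}$ for some $N\in\Z_{\geq 0}$; hence each $\mathrm{c\text{--} Ind}_K^G\sigma\otimes_{\mathfrak{Z}_\Omega}\kappa(\mathfrak{m})$ is isomorphic to $P(\chi)^{\oplus n_\sigma}$ for a non-negative integer $n_\sigma$. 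By Lemma~\ref{lem0.8} one may write $n_\sigma = \dim_E\mathrm{Hom}_G(\mathrm{c\text{--} Ind}_K^G\sigma, P(\chi))$, which depends only on $\sigma$ and not on $\mathfrak{m}\in S$. Matching multiplicities of $P(\chi)$ on the two sides of the displayed isomorphism then immediately gives the first identity $\sum_\sigma m_\sigma n_\sigma = |W(D)|$.

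For the second identity I would rewrite
\[
\sum_\sigma m_\sigma^2 \;=\; \dim_E \mathrm{End}_K(\mathrm{Ind}_J^K\lambda),
\]
using Schur and the decomposition $\mathrm{Ind}_J^K\lambda = \bigoplus_\sigma \sigma^{\oplus m_\sigma}$ together with the fact that $E$ is algebraically closed. I would then identify $\mathrm{End}_K(\mathrm{Ind}_J^K\lambda)$ with the ``finite'' sub-Hecke-algebra $\mathcal{H}(K,\lambda)\subset\mathcal{H}(G,\lambda)$ of functions supported in $K$, and invoke the Bernstein-style presentation $\mathcal{H}(G,\lambda)\simeq\mathcal{H}(K,\lambda)\otimes_E\mathfrak{Z}_D$ (as $\mathfrak{Z}_D$-modules) coming from the Bushnell--Kutzko identification of $\mathcal{H}(G,\lambda)$ with an extended affine Hecke algebra. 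Combined with the rank $|W(D)|$ of $\mathcal{H}(G,\lambda)$ as a free $\mathfrak{Z}_D$-module, which is already used in the proofs of Theorem~\ref{thm0.1} and Lemma~\ref{lem0.5}, this pins down $\dim_E\mathcal{H}(K,\lambda) = |W(D)|$ and hence $\sum_\sigma m_\sigma^2 = |W(D)|$.

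The main obstacle is precisely this last input, $\dim_E\mathcal{H}(K,\lambda) = |W(D)|$: it requires the fine structural theory of $\mathcal{H}(G,\lambda)$ from the work of Bushnell--Kutzko rather than anything formal coming out of Proposition~\ref{lem0.6}. Everything else is routine bookkeeping, namely Schur's lemma applied to the irreducible parabolic induction $P(\chi)$ together with the specialization provided by Proposition~\ref{lem0.6}.
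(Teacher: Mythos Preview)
Your argument is correct, and for the first part it is actually a bit cleaner than the paper's: the paper passes through the equivalence $\mathfrak{M}_\lambda$ to the module category of $\mathcal{H}(G,\lambda)$ and then invokes Krull--Remak--Schmidt to decompose each $\mathfrak{M}_\lambda(\cI_K^G\sigma\otimes\kappa(\mathfrak{m}))$ into copies of $\mathfrak{M}_\lambda(P(\chi))$, whereas your observation that any direct summand of the semisimple isotypic object $P(\chi)^{\oplus|W(D)|}$ is again a power of $P(\chi)$ does the job directly. One small point: your claim that $n_\sigma=\dim_E\mathrm{Hom}_G(\cI_K^G\sigma,P(\chi))$ is independent of $\mathfrak m\in S$ is true but deserves a word of justification---by Frobenius this dimension is $\dim_E\mathrm{Hom}_K(\sigma,P(\chi)|_K)$, and $P(\chi)|_K$ does not see the unramified twist $\overline\chi$ (Iwasawa decomposition). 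The paper in fact does not bother with this and explicitly allows $n_\sigma$ to depend on $\chi$ at this stage; independence only drops out later from $n_\sigma=m_\sigma$.

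For the identity $\sum_\sigma m_\sigma^2=|W(D)|$ your route and the paper's diverge. The paper computes $\dim_E\mathrm{End}_K(\mathrm{Ind}_J^K\lambda)$ via Mackey, reducing to the count of double cosets $J\backslash K/J$ whose representatives intertwine $\lambda$, and then invokes the Bushnell--Kutzko intertwining results (\cite{MR1204652}~(5.5.11), and \cite{MR1711578} Cor.~1.6 in the semisimple case) to identify this set with $W_0(\mathfrak{B})$, of cardinality $|W(D)|$. Your approach instead uses the support-preserving Hecke-algebra isomorphism of \cite{MR1204652}~(5.6) (and its semisimple analogue) together with the Bernstein presentation of the affine Hecke algebra to read off $\dim_E\mathcal{H}(K,\lambda)=|W(D)|$. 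Both are legitimate and ultimately rest on the same structure theory; the paper's count is more hands-on, while yours packages the input at the level of the Hecke algebra. One caution: the rank $|W(D)|$ of $\mathcal{H}(G,\lambda)$ over $\mathfrak{Z}_D$ is not actually stated in Theorem~\ref{thm0.1} or Lemma~\ref{lem0.5} (those only give freeness and finite generation, resp.\ the rank of $\mathfrak{Z}_D$ over $\mathfrak{Z}_\Omega$), so you should cite the Bernstein presentation or the support-preserving isomorphism to the Iwahori Hecke algebra directly rather than those lemmas.
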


\begin{proof} It follows from decomposition (\ref{decomp2}) and from Proposition \ref{lem0.6} that:
	$$\bigoplus_{\sigma \in \widehat{K}} (\mathrm{c\text{--} Ind}_{K}^{G}\sigma \otimes_{\mathfrak{Z}_{\Omega}} \kappa(\mathfrak{m}))^{\oplus m_{\sigma}} \simeq P(\chi)^{\oplus |W(D)|}.$$
	
	\noindent Then we also have
	$$\bigoplus_{\sigma \in \widehat{K}} (\mathfrak{M}_{\lambda}(\mathrm{c\text{--} Ind}_{K}^{G}\sigma \otimes_{\mathfrak{Z}_{\Omega}} \kappa(\mathfrak{m})))^{\oplus m_{\sigma}} \simeq \mathfrak{M}_{\lambda}(P(\chi))^{\oplus |W(D)|}.$$
	
	Observe that by Proposition \ref{lem0.6} the representation $P(\chi)$ is irreducible, in particular is indecomposable. The same observation holds in the category of $\mathcal{H}(G,\lambda)$-modules for $\mathfrak{M}_{\lambda}(P(\chi))$. Moreover the $\mathcal{H}(G,\lambda)$-module $\mathfrak{M}_{\lambda}(\mathrm{c\text{--} Ind}_{K}^{G}\sigma \otimes_{\mathfrak{Z}_{\Omega}} \kappa(\mathfrak{m}))$ is of finite length hence by $\S$2, n$^{\circ}$5, Theorem 2. a) \cite{MR3027127} it can be written as a direct sum of indecomposable modules $I_k(\sigma)$:
	$$\mathfrak{M}_{\lambda}(\mathrm{c\text{--} Ind}_{K}^{G}\sigma \otimes_{\mathfrak{Z}_{\Omega}} \kappa(\mathfrak{m})) = \bigoplus_k I_k(\sigma).$$
	
	\noindent Then, again, by theorem of by Krull-Remak-Schmidt Theorem($\S$2, n$^{\circ}$5, Theorem 2. b) \cite{MR3027127}), the decomposition:
	$$\bigoplus_{\sigma \in \widehat{K}} \bigoplus_k I_k(\sigma)^{\oplus m_{\sigma}}  \simeq \mathfrak{M}_{\lambda}(P(\chi))^{\oplus |W(D)|},$$
	
	\noindent into indecomposable sub-modules is unique up to permutation of factors.  This theorem is applicable because all the modules in the direct sum are of finite length. It follows that by the uniqueness of such a decomposition there exists an integer $n_{k,\sigma}$, such that:
	$$I_k(\sigma) \simeq \mathfrak{M}_{\lambda}(P(\chi))^{\oplus n_{k,\sigma}}$$
	
	\noindent Then there exists an integer $ n_{\sigma}:=\sum_k n_{k,\sigma}$ (that may depend on $\chi$ as well) such that:
	$$\mathfrak{M}_{\lambda}(\mathrm{c\text{--} Ind}_{K}^{G}\sigma \otimes_{\mathfrak{Z}_{\Omega}} \kappa(\mathfrak{m})) \simeq \mathfrak{M}_{\lambda}(P(\chi))^{\oplus n_{\sigma}}.$$
	
	\noindent So the same holds for representations:
	$$\mathrm{c\text{--} Ind}_{K}^{G}\sigma \otimes_{\mathfrak{Z}_{\Omega}} \kappa(\mathfrak{m}) \simeq P(\chi)^{\oplus n_{\sigma}}.$$
	
	\noindent Then by definition of $n_{\sigma}$ we have:
	$$\sum\limits_{\sigma \in \widehat{K}} m_{\sigma}n_{\sigma} =|W(D)|.$$
	
	Let's compute $\dim_{E} \mathrm{Hom}_{K}(\mathrm{Ind}_{J}^{K}\lambda, \mathrm{Ind}_{J}^{K}\lambda)$ in two different ways. By restriction induction formula we have 
	$$\mathrm{Res}_{J}^{K} \mathrm{Ind}_{J}^{K}\lambda = \bigoplus_{\overline{g} \in J\setminus K /J} \mathrm{Ind}_{K \cap J^{g}}^{K} \mathrm{Res}_{K \cap J^{g}}^{J^{g}} \lambda^{g},$$
	
	\noindent where $g$ is a representative of $\overline{g}$. Then combining it with Frobenius reciprocity we get:
	$$\mathrm{Hom}_{K}(\mathrm{Ind}_{J}^{K}\lambda, \mathrm{Ind}_{J}^{K}\lambda) = \bigoplus_{\overline{g} \in J\setminus K /J} \mathrm{Hom}_{J \cap J^{g}}(\lambda, \lambda^{g})$$
	
	\noindent By definition the space $\mathrm{Hom}_{J \cap J^{g}}(\lambda, \lambda^{g})$ is the intertwining space. Assume first that $(J, \lambda)$ is a simple type. In the course of this proof we will use the same notation from the book \cite{MR1204652}. Let $\Gamma=F[\beta]$ be a finite extension of $F$, which is denoted $E$ in the chapter 5 of that book. Then according to \cite{MR1204652} (5.5.11) $g \in G$ intertwines $\lambda$ if and only if $g \in J\tilde{W}(\mathfrak{B})J$. So $g \in K$ intertwines $\lambda$ if and only if $g \in J\tilde{W}(\mathfrak{B})J \cap K = J W_{0}(\mathfrak{B}) J$ and $|W_{0}(\mathfrak{B})| = e(\mathfrak{B}|\mathfrak{o}_{\Gamma})!$ by construction. In simple type case we have then $\dim_{E} \mathrm{Hom}_{K}(\mathrm{Ind}_{J}^{K}\lambda, \mathrm{Ind}_{J}^{K}\lambda) = e(\mathfrak{B}|\mathfrak{o}_{\Gamma})! = |W(D)|$.

	In general case we have to deal with semi-simple types. The reference is \cite{MR1711578}. Let $\overline{M}$ be a unique Levi subgroup of $G$ which contains the $N_{G}(M)$-stabilizer of the inertia class $D$ and is minimal for this property. The Levi subgroup $\overline{M}$ is the $G$-stabilizer of a decomposition $V=\bigoplus_{i=1}^{s} W_{i}$ of $V \simeq F^{n}$ as a sum of non-zero subspaces $W_{i}$. Set $G_{i}=\mathrm{Aut}_{F}(W_{i})$. We then have  $M=\prod_{i=1}^{s} M_{i}$ and $K \cap M=\prod_{i=1}^{s} K_{i}$, where $M_{i}=M \cap G_{i}=GL_{n_{i}}(F)^{e_{i}}$ and $K_{i}=K \cap G_{i}$. The type $(J_{M}, \lambda_{M})$ decomposes as a tensor product of types $(J_{M_{i}}, \lambda_{M_{i}})$, each of which admits a $G_{i}$-cover $(J_{\overline{M}_{i}}, \lambda_{\overline{M}_{i}})$ as in \cite{MR1711578} section 1.4. We put $J_{\overline{M}}=\prod_{i=1}^{s} J_{\overline{M}_{i}}$ and $\lambda_{\overline{M}}=\bigotimes_{i=1}^{s} \lambda_{\overline{M}_{i}}$. The main theorem asserts that $(J, \lambda)$ is a $G$-cover of $(J_{\overline{M}}, \lambda_{\overline{M}})$.
	
	It follows from corollary 1.6 in \cite{MR1711578}, that $g \in G$ intertwines $\lambda$ if and only if it is of the form $g=j_{1}mj_{2}$, where $j_{1}$ and $j_{2}$ are in $J$ and $m \in \overline{M}$, which intertwines $\lambda_{\overline{M}}$. The element $m$ can be written as $m=m_{1} \otimes\ldots\otimes m_{s}$, where $m_{i} \in M_{i}$ intertwine $\lambda_{\overline{M}_{i}}$.
	Then according to \cite{MR1204652} (5.5.11) $m_{i} \in M_{i}$ intertwine $\lambda_{\overline{M}_{i}}$  if and only if $m \in J_{\overline{M}_{i}}\tilde{W}(\mathfrak{B}_{i})J_{\overline{M}_{i}}$ (with analogous notations to 5.5\cite{MR1204652}). This shows that $m \in \overline{M}$ intertwine $\lambda_{\overline{M}}$ if and only if $m \in J_{\overline{M}}(\prod_{i=1}^{s} \tilde{W}(\mathfrak{B}_{i}))J_{\overline{M}}$.
	
	The decomposition
	$$\mathrm{Hom}_{K}(\mathrm{Ind}_{J}^{K}\lambda, \mathrm{Ind}_{J}^{K}\lambda) = \bigoplus_{\overline{g} \in J\setminus K /J} \mathrm{Hom}_{J \cap J^{g}}(\lambda, \lambda^{g}),$$
	
	\noindent where $g$ is a representative of $\overline{g}$,  shows that
	\begin{align}
	& \dim_{E} \mathrm{Hom}_{K}(\mathrm{Ind}_{J}^{K}\lambda, \mathrm{Ind}_{J}^{K}\lambda) = |J \setminus\left\lbrace g \in K | g \:\text{intertwines}\: \lambda \right\rbrace /J|\nonumber \\ 
	&=|J \setminus\left\lbrace g \in K | g=j_{1}mj_{2}, j_{1} \text{ and } j_{2} \text{ are in } J \text{ and } m \in \overline{M}  \text{ intertwines }  \lambda_{\overline{M}}  \right\rbrace /J| \nonumber \\ 
	&=|J \setminus\left\lbrace g \in K | g=j_{1}mj_{2}; j_{1},j_{2} \in  J \text{ and } m \in J_{\overline{M}}(\prod_{i=1}^{s} \tilde{W}(\mathfrak{B}_{i}))J_{\overline{M}}  \right\rbrace /J| \nonumber\\
	& =|J \setminus K \cap (JJ_{\overline{M}}(\prod_{i=1}^{s} \tilde{W}(\mathfrak{B}_{i}))J_{\overline{M}}J)/J|\nonumber \\ 
	& =|J \setminus (JJ_{\overline{M}}(\prod_{i=1}^{s} K_{i} \cap  \tilde{W}(\mathfrak{B}_{i}))J_{\overline{M}}J)/J|\nonumber
	\end{align} 
	\begin{flushleft}
		$=|\prod_{i=1}^{s} W_{0}(\mathfrak{B}_{i})|= \prod_{i=1}^{s}| W(D_{i})| =|W(D)|.$	
	\end{flushleft}

	\noindent Hence in every case
	$$\dim_{E} \mathrm{Hom}_{K}(\mathrm{Ind}_{J}^{K}\lambda, \mathrm{Ind}_{J}^{K}\lambda) = |W(D)|.$$
	
	We have the following decomposition $\mathrm{Ind}_{J}^{K} \lambda = \bigoplus_{\sigma \in \widehat{K}} \sigma^{\oplus m_{\sigma}}$, by definition of multiplicities. Then
	$$\dim_{E} \mathrm{Hom}_{K}(\mathrm{Ind}_{J}^{K}\lambda, \mathrm{Ind}_{J}^{K}\lambda) = \sum\limits_{\sigma \in \widehat{K}} m_{\sigma}^{2}.$$
\end{proof}

The rest of this section will be focused on proving that $m_{\sigma}=n_{\sigma}$. Let $\mathcal{R}_{\lambda}(K)$ denote the category of smooth $K$-representations generated by their $\lambda$-isotypic subspace.

\begin{lemma}\label{lem0.11}
	The category $\mathcal{R}_{\lambda}(K)$ is abelian.
\end{lemma}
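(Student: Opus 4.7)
The plan is to exploit the semisimplicity of smooth representations of the compact group $K$. Since $K$ is compact and $E$ has characteristic zero, every smooth $K$-representation decomposes as a direct sum of irreducible subrepresentations, so $\mathcal{R}(K)$ is itself a semisimple abelian category and in particular admits all kernels, cokernels, subobjects and quotients.

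The first step is to give an intrinsic characterization of the objects of $\mathcal{R}_{\lambda}(K)$: I claim that $V \in \mathcal{R}(K)$ lies in $\mathcal{R}_{\lambda}(K)$ if and only if every irreducible $K$-summand $\tau$ of $V$ satisfies $\mathrm{Hom}_{J}(\lambda, \tau) \neq 0$. For one direction, suppose $V$ has an irreducible $K$-summand $\tau$ with $\tau^{\lambda} = 0$, and write $V = \tau \oplus V'$ in $\mathcal{R}(K)$. Then $V^{\lambda} \subset V'$, so the $K$-subrepresentation generated by $V^{\lambda}$ is contained in $V'$ and therefore cannot equal $V$. For the converse, if every irreducible summand $\tau$ of $V$ satisfies $\tau^{\lambda} \neq 0$, then the $K$-subrepresentation of $\tau$ generated by $\tau^{\lambda}$ is a nonzero subobject of the irreducible $\tau$, hence equals $\tau$; summing over all summands shows that $V$ is generated by $V^{\lambda}$.

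With this characterization in hand, it is immediate that $\mathcal{R}_{\lambda}(K)$ is stable in $\mathcal{R}(K)$ under subobjects, quotients and arbitrary direct sums, because in a semisimple category each of these operations produces an object whose isomorphism classes of irreducible summands form a subset of those of the original. In particular the kernel, cokernel, image and coimage of any morphism between two objects of $\mathcal{R}_{\lambda}(K)$, computed in the ambient semisimple category $\mathcal{R}(K)$, again lie in $\mathcal{R}_{\lambda}(K)$, and the canonical map from coimage to image is an isomorphism there; this shows $\mathcal{R}_{\lambda}(K)$ is a full abelian subcategory of $\mathcal{R}(K)$.

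The only nontrivial ingredient is the intrinsic characterization of $\mathcal{R}_{\lambda}(K)$ in terms of the irreducible summands, and this is essentially forced by the semisimplicity of $\mathcal{R}(K)$; once it is available, the abelian property is automatic, so I do not anticipate any substantial obstacle.
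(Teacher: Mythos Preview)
Your proof is correct and shares the paper's overall strategy---use semisimplicity of $\mathcal{R}(K)$ and characterize membership in $\mathcal{R}_{\lambda}(K)$ via the irreducible constituents---but your execution is substantially more elementary. The paper proves the key characterization by an indirect route: given $\delta \in \mathcal{R}_{\lambda}(K)$, it passes to the $G$-representation $\cI_K^G \delta$, forms the $\mathcal{H}(G,\lambda)$-module $M(\delta)=\mathrm{Hom}_J(\lambda,(\cI_K^G\delta)|_J)$, restricts to an $\mathcal{H}(K,J,\lambda)$-module, and then argues that every irreducible subquotient of $\delta$ occurs in $M(\delta)\otimes_{\mathcal{H}(K,J,\lambda)}\mathrm{Ind}_J^K\lambda$, hence is a summand of $\mathrm{Ind}_J^K\lambda$ and so has nonzero $\lambda$-isotypic part. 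Your argument---observe directly that if some irreducible summand $\tau$ has $\tau^{\lambda}=0$ then writing $V=\tau\oplus V'$ forces $V^{\lambda}\subset V'$, while conversely each irreducible $\tau$ with $\tau^{\lambda}\neq 0$ is generated by it---bypasses the compact induction to $G$ and the type property entirely and stays within $\mathcal{R}(K)$. What you gain is self-containment and transparency; the paper's detour does not buy anything extra for this particular lemma, though the Hecke-module viewpoint it invokes is of course what drives the subsequent equivalence in Lemma~\ref{lem0.12}.
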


\begin{proof} Observe that any smooth $K$-representation is semi-simple, since $K$ is compact. Then it is enough to show that a smooth representation of $K$ is an object of $\mathcal{R}_{\lambda}(K)$ if and only if all of its irreducible subquotients are generated by their $\lambda$-isotypic subspaces.

	An irreducible summand $\sigma$ of $\mathrm{Ind}_J^K \lambda$ will be generated by  its $\lambda$-isotypic subspace. Indeed 
	$$n=\dim_E\mathrm{Hom}_{K}( \sigma, \mathrm{Ind}_{J}^{K} \lambda)= \dim_E \mathrm{Hom}_{J}(\sigma, \lambda)= \dim_E \mathrm{Hom}_{J}(\lambda, \sigma)\neq 0,$$
	
	\noindent because the restriction of $\sigma$ to $J$ is semi-simple. Therefore the $K$-stable subspace of $\sigma$ generated by $\lambda^{\oplus n}$ is non zero. So is the whole $\sigma$, since this $K$-representation is irreducible. If $M$ is any module of $\mathcal{H}(K, J,\lambda):= \mathrm{End}_{K}(\mathrm{Ind}_{J}^{K} \lambda)$, then by writing $M$ as a quotient of free module, we deduce that $M\otimes_{\mathcal{H}(K, J,\lambda)} \mathrm{Ind}_{J}^{K} \lambda$ is  a quotient  of a direct sum of copies of $\mathrm{Ind}_{J}^{K} \lambda$.  Thus irreducible subquotients of  $M\otimes_{\mathcal{H}(K, J,\lambda)} \mathrm{Ind}_J^K \lambda$ will be subquotients of $\mathrm{Ind}_{J}^{K} \lambda$. But this means that all irreducible subquotients are generated by the $\lambda$-isotypic subspace.
	
	Let now $\delta$ any object of $\mathcal{R}_{\lambda}(K)$. Since $\lambda$ is a type, observe that $M(\delta):=\mathrm{Hom}_{J}(\lambda, (\mathrm{ c\text{--}Ind}_K^G \delta)|J)=\mathrm{Hom}_{K}(\mathrm{Ind}_J^K\lambda, (\mathrm{ c\text{--}Ind}_K^G \delta)|K)$ is a non zero $\mathcal{H}(G,\lambda)$-module, which can be viewed as $\mathcal{H}(K, J,\lambda)$-module via restriction. Moreover all the irreducible subquotients of $\delta$ appear in $M(\delta)\otimes_{\mathcal{H}(K, J,\lambda)} \mathrm{Ind}_J^K \lambda$. By the second paragraph it follows that all irreducible subquotients of $\delta$ are generated by the $\lambda$-isotypic subspace. The other direction is trivial.
\end{proof}

\begin{lemma}\label{lem0.12}
	The categories $\mathcal{R}_{\lambda}(K)$ and $\mathcal{H}(K, J,\lambda)-Mod$ are equivalent, where $\mathcal{H}(K, J,\lambda):= \mathrm{End}_{K}(\mathrm{Ind}_{J}^{K} \lambda)$.
\end{lemma}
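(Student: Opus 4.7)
The plan is to exhibit the standard Morita-style adjoint pair
\begin{align*}
F &: \mathcal{R}_{\lambda}(K) \longrightarrow \mathcal{H}(K, J,\lambda)\text{-Mod}, & \delta &\longmapsto \mathrm{Hom}_{K}(\mathrm{Ind}_{J}^{K}\lambda,\delta),\\
G &: \mathcal{H}(K, J,\lambda)\text{-Mod} \longrightarrow \mathcal{R}_{\lambda}(K), & M &\longmapsto M\otimes_{\mathcal{H}(K, J,\lambda)}\mathrm{Ind}_{J}^{K}\lambda,
\end{align*}
and to show that the unit and counit of this adjunction are isomorphisms.

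First I would note that $J$ is open in the compact group $K$, so $K/J$ is finite and $\mathrm{Ind}_{J}^{K}\lambda$ is a finite-dimensional smooth representation; consequently $\mathcal{H}(K,J,\lambda)$ is a finite-dimensional $E$-algebra. Because $K$ is compact and $E$ has characteristic zero, every smooth $K$-representation is semisimple, so $\mathrm{Ind}_{J}^{K}\lambda$ is in particular a projective object of $\mathcal{R}(K)$ and $F$ is exact; moreover, being finite dimensional it is a compact object, so $F$ commutes with arbitrary direct sums. I would also check that $G$ actually lands in $\mathcal{R}_{\lambda}(K)$: its image is by construction a quotient of a direct sum of copies of $\mathrm{Ind}_{J}^{K}\lambda$, and the argument in the proof of Lemma \ref{lem0.11} shows that all irreducible subquotients of such an object are subquotients of $\mathrm{Ind}_{J}^{K}\lambda$, hence generated by their $\lambda$-isotypic subspace.

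Next I would verify that the counit $G(F(\delta))\to\delta$ is an isomorphism. It holds tautologically for $\delta=\mathrm{Ind}_{J}^{K}\lambda$, since $F(\mathrm{Ind}_{J}^{K}\lambda)=\mathcal{H}(K,J,\lambda)$ as a module over itself. By the proof of Lemma \ref{lem0.11}, any object of $\mathcal{R}_{\lambda}(K)$ is (semisimplicity again) a direct sum of irreducible summands of $\mathrm{Ind}_{J}^{K}\lambda$, and in particular a quotient of a direct sum of copies of $\mathrm{Ind}_{J}^{K}\lambda$. Since $F$ preserves direct sums, $G$ is right exact, and $F$ is exact, the counit propagates from the single generator to all of $\mathcal{R}_{\lambda}(K)$. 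The symmetric argument handles the unit $M\to F(G(M))$: check it for $M=\mathcal{H}(K,J,\lambda)$, extend to free modules by compatibility with direct sums, and pass to an arbitrary $M$ via a free presentation $\mathcal{H}(K,J,\lambda)^{(I_1)}\to\mathcal{H}(K,J,\lambda)^{(I_0)}\to M\to 0$ using right exactness of $G$ and exactness of $F$.

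The main (mild) obstacle will be the bookkeeping around infinite direct sums, since objects of $\mathcal{R}_{\lambda}(K)$ and modules over $\mathcal{H}(K,J,\lambda)$ need not be finitely generated. This is precisely where the finite dimensionality of $\mathrm{Ind}_{J}^{K}\lambda$ as an $E$-representation of $K$ is essential: it guarantees that $F$ preserves arbitrary direct sums (and not merely finite ones), which in turn lets one bootstrap the isomorphism on the distinguished generator $\mathrm{Ind}_{J}^{K}\lambda$ to the statement on every object of $\mathcal{R}_{\lambda}(K)$.
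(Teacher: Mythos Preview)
Your argument is correct: it is precisely the standard Morita-theoretic proof that a compact projective generator $P$ of an abelian category induces an equivalence with $\mathrm{End}(P)$-modules, specialized to $P=\mathrm{Ind}_{J}^{K}\lambda$ in the semisimple category $\mathcal{R}_{\lambda}(K)$. The paper does not give a proof at all but simply cites a general result (\cite[Section~1~(4.1)]{MR0249491}); your write-up is essentially the content of that citation in this concrete setting, so the approaches coincide in spirit even though you spell out what the paper leaves as a black box.
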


\begin{proof} This is a consequence of a more general result \cite[Section 1 (4.1)]{MR0249491}. 
\end{proof}

\begin{lemma}\label{lem0.10}
	Let $\sigma \in \widehat{K}$, where $\widehat{K}$ is a set of all isomorphism classes of irreducible $K$-representations. Write $m_{\sigma}:=\dim_{E} \mathrm{Hom}_{K}(\sigma, \mathrm{Ind}_{J}^{K} \lambda)$ for its multiplicity. Then:
	$$m_{\sigma} = \dim_{E} \mathrm{Hom}_{G}(\mathrm{c\text{--} Ind}_{K}^{G}\sigma, P(\chi)) = n_{\sigma}.$$
\end{lemma}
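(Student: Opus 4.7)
The second equality $\dim_E \mathrm{Hom}_G(\cI_K^G \sigma, P(\chi)) = n_\sigma$ is essentially immediate: Lemma \ref{lem0.8} lets me replace $\cI_K^G \sigma$ by $\cI_K^G \sigma \otimes_{\mathfrak{Z}_\Omega} \kappa(\mathfrak{m})$, which Corollary \ref{coro1} identifies with $P(\chi)^{\oplus n_\sigma}$ on a Zariski dense set $S$. Since $P(\chi)$ is irreducible by Proposition \ref{lem0.6}, Schur's lemma delivers the dimension.

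For the first equality, apply Frobenius reciprocity to rewrite $\mathrm{Hom}_G(\cI_K^G \sigma, P(\chi)) \simeq \mathrm{Hom}_K(\sigma, P(\chi)|_K)$, and pass through the equivalence of Lemmas \ref{lem0.11} and \ref{lem0.12}. Let $V_K \subseteq P(\chi)|_K$ be the $K$-subrepresentation generated by the $\lambda$-isotypic subspace of $P(\chi)|_J$; by construction $V_K \in \mathcal{R}_\lambda(K)$, and by Lemma \ref{lem0.11} any $K$-homomorphism from an irreducible $\sigma$ with $m_\sigma > 0$ (equivalently, $\sigma \in \mathcal{R}_\lambda(K)$) into $P(\chi)|_K$ must factor through $V_K$, since its image is a quotient of $\sigma$ and so lies in $\mathcal{R}_\lambda(K)$. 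Hence $\mathrm{Hom}_K(\sigma, P(\chi)|_K) = \mathrm{Hom}_K(\sigma, V_K)$.

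Under the equivalence $\mathcal{R}_\lambda(K) \simeq \mathcal{H}(K,J,\lambda)\text{-Mod}$ of Lemma \ref{lem0.12}, the representation $\mathrm{Ind}_J^K \lambda$ corresponds to the regular module, each irreducible $\sigma$ with $m_\sigma > 0$ corresponds to a simple module $M_\sigma$ of dimension $m_\sigma$ by Artin--Wedderburn (the algebra $\mathcal{H}(K,J,\lambda)$ being semisimple of total dimension $\sum_\sigma m_\sigma^2 = |W(D)|$ as in Corollary \ref{coro1}), and $V_K$ corresponds to $N := \mathrm{Hom}_J(\lambda, P(\chi))$, an $\mathcal{H}(K,J,\lambda)$-module of dimension $|W(D)|$ by Proposition \ref{lem0.6}. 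Both modules thus have $E$-dimension $|W(D)| = \dim \mathcal{H}(K,J,\lambda)$, so the desired equality $\ell_\sigma := \dim_E \mathrm{Hom}_K(\sigma, V_K) = m_\sigma$ is equivalent to the assertion that $N$ is isomorphic to the regular $\mathcal{H}(K,J,\lambda)$-module, i.e.\ that $V_K \simeq \mathrm{Ind}_J^K \lambda$ as $K$-representations.

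The main obstacle is precisely this last identification. My plan is to invoke the Bushnell--Kutzko/Bernstein presentation of the affine Hecke algebra $\mathcal{H}(G,\lambda)$: the finite-type Hecke subalgebra $\mathcal{H}(K,J,\lambda)$ admits a commutative complement $A$ so that $\mathcal{H}(G,\lambda)$ is free of rank one as a left module over $\mathcal{H}(K,J,\lambda) \otimes_E A$. For the generic $\chi$ corresponding to $\mathfrak{m} \in S$, the simple $\mathcal{H}(G,\lambda)$-module $N$ is induced from a character of $A$, so its restriction to $\mathcal{H}(K,J,\lambda)$ becomes the regular representation. Granted this structural fact, $V_K \simeq \mathrm{Ind}_J^K \lambda$, whence $\ell_\sigma = m_\sigma$ for every irreducible $\sigma$ appearing in $\mathrm{Ind}_J^K \lambda$, and both equalities in the lemma follow simultaneously.
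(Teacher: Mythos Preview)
Your strategy matches the paper's: both reduce the first equality to proving that $V_K\simeq\mathrm{Ind}_J^K\lambda$, equivalently that the $\mathcal{H}(K,J,\lambda)$-module $N=\mathrm{Hom}_J(\lambda,P(\chi))$ is the regular module (the paper writes $\tau$ for your $V_K$), and both handle the second equality $=n_\sigma$ identically. Two differences are worth noting. First, you observe that every $K$-map $\sigma\to P(\chi)|_K$ with $m_\sigma>0$ already factors through $V_K$, so $V_K\simeq\mathrm{Ind}_J^K\lambda$ yields $m_\sigma=n_\sigma$ directly; the paper instead extracts only the inequality $m_\sigma\le n_\sigma$ from the inclusion $\tau\hookrightarrow P(\chi)|_K$ and then invokes the identities $\sum m_\sigma n_\sigma=\sum m_\sigma^2=|W(D)|$ of Corollary~\ref{coro1} to force equality. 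Second, for the key step itself the paper uses the \emph{support-preserving} Hecke-algebra isomorphism $\mathcal{H}(G,J,\lambda)\simeq\mathcal{H}(G_\Gamma,I_\Gamma,1)$ of \cite[(5.6)]{MR1204652} (tensored up in the semi-simple case via \cite{MR1711578}), which restricts to $\mathcal{H}(K,J,\lambda)\simeq\mathcal{H}(K_\Gamma,I_\Gamma,1)$, and then checks by hand in the Iwahori setting that the relevant module is regular. Your Bernstein-presentation argument is a legitimate alternative in spirit, but as written it is imprecise: $\mathcal{H}(G,\lambda)$ is not a module over the \emph{algebra} $\mathcal{H}(K,J,\lambda)\otimes_E A$, since those two subalgebras do not commute; the correct statement is that multiplication gives a vector-space (equivalently left-$\mathcal{H}(K,J,\lambda)$, right-$A$) isomorphism, so that $\mathcal{H}(G,\lambda)\otimes_A E_\psi$ restricts to the regular $\mathcal{H}(K,J,\lambda)$-module for any character $\psi$ of $A$. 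You still owe the identification of $N$ with such an induced module, i.e.\ matching the parabolically induced $P(\chi)$ with a principal-series module on the Hecke side --- and that is exactly what the Bushnell--Kutzko cover formalism supplies, so in the end your sketch leans on the same machinery the paper invokes explicitly.
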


\begin{proof}   We claim that $\mathrm{Hom}_{G}(\mathrm{c\text{--} Ind}_{J}^{G}\lambda, P(\chi))$  is a free rank one module over algebra $\mathcal{H}(K, J, \lambda) = \left\lbrace f \in \mathcal{H}(G, J, \lambda) | \mathrm{supp}(f) \subset K\right \rbrace$, where $\mathcal{H}(G, J, \lambda):=\mathrm{End}_G(\cI_J^G \lambda)$. Let's first deal with a particular case before dealing with the general case.
	
	\noindent \textbf{1. Simple type case. } Assume that $\lambda$ is a simple type. It follows from (5.6) of \cite{MR1204652} that there is a support preserving isomorphism of Hecke algebras $\mathcal{H}(G_{\Gamma}, I_{\Gamma}, 1)  \simeq \mathcal{H}(G, J, \lambda)$, where $\Gamma$ is a well defined extension of $F$(denoted $K$ in \cite{MR1204652}), $G_{\Gamma}=GL_{e}(\Gamma)$ with $I_{\Gamma}$ the Iwahori subgroup of $G_{\Gamma}$ and $K_{\Gamma}$ be a maximal compact subgroup of $G_{\Gamma}$. Let $M=\mathrm{Hom}_{G}(\mathrm{c\text{--} Ind}_{J}^{G}\lambda, P(\chi))=\mathrm{Hom}_{J}(\lambda, P(\chi)|J) =\mathrm{Hom}_{K}(\mathrm{Ind}_{J}^{K}\lambda, P(\chi)|K)$, this is an $\mathcal{H}(G, J, \lambda)$-module.
	
	Notice that when $P(\chi) = i_{\overline{P}}^{G} (\rho \otimes \overline{\chi})$ is irreducible, the $\mathcal{H}(G, J, \lambda)$-module $M$ is simple. The module $M$ is also naturally an $\mathcal{H}(G_{\Gamma}, I_{\Gamma}, 1) $-module, and corresponds to an irreducible representation $M \otimes_{\mathcal{H}(G_{\Gamma}, I_{\Gamma}, 1)} \mathrm{c\text{--} Ind}_{I_{\Gamma}}^{G_{\Gamma}} 1 \simeq i_{B_{\Gamma}}^{G_{\Gamma}} \overline{\chi}_{\Gamma}$, where $\overline{\chi}_{\Gamma}$ is an unramified character of Borel subgroup $B_{\Gamma}$ of $G_{\Gamma}$, making $i_{B_{\Gamma}}^{G_{\Gamma}} \overline{\chi}_{\Gamma}$ irreducible. Notice that $M=\mathrm{Hom}_{J}(\lambda, P(\chi)|J)$ does not depend on the character $\overline{\chi}$, so that discussion above is always valid.
	$$\mathrm{Hom}_{G_{\Gamma}}(\mathrm{c\text{--} Ind}_{I_{\Gamma}}^{G_{\Gamma}} 1, i_{B_{\Gamma}}^{G_{\Gamma}} \overline{\chi}_{\Gamma}) = \mathrm{Hom}_{G_{\Gamma}}(\mathrm{c\text{--} Ind}_{I_{\Gamma}}^{G_{\Gamma}} 1, M \otimes_{\mathcal{H}(G_{\Gamma}, I_{\Gamma}, 1)} \mathrm{c\text{--} Ind}_{I_{\Gamma}}^{G_{\Gamma}} 1)= M.$$
	
	According to the description of Hecke algebras in section (5.6) of \cite{MR1204652} the isomorphism of Hecke algebras $t:\mathcal{H}(G_{\Gamma}, I_{\Gamma}, 1)  \simeq \mathcal{H}(G, J, \lambda)$ is support preserving,  in the sense that $\mathrm{supp}(tf)=J.\mathrm{supp}(f).J$, we have also a natural isomorphism between $\mathcal{H}(K_{\Gamma}, I_{\Gamma}, 1)= \left\lbrace f \in \mathcal{H}(G_{\Gamma}, I_{\Gamma}, 1) | \mathrm{supp}(f) \subset K_{\Gamma}\right \rbrace$ and $\mathcal{H}(K, J, \lambda)= \left\lbrace f \in \mathcal{H}(G, J, \lambda) | \mathrm{supp}(f) \subset K\right \rbrace$. Then we have:
	\begin{align}
	&\mathrm{Hom}_{K}(\mathrm{Ind}_{J}^{K}\lambda, P(\chi)|K)  = \nonumber \\
	&=\mathrm{Hom}_{G}(\mathrm{c\text{--} Ind}_{J}^{G}\lambda, P(\chi))=\mathrm{Hom}_{G_{\Gamma}}(\mathrm{c\text{--} Ind}_{I_{\Gamma}}^{G_{\Gamma}} 1, i_{B_{\Gamma}}^{G_{\Gamma}} \overline{\chi}_{\Gamma}) \nonumber\\
	&=\mathrm{Hom}_{K_{\Gamma}}(\mathrm{c\text{--} Ind}_{I_{\Gamma}}^{K_{\Gamma}} 1, i_{B_{\Gamma} \cap K_{\Gamma}}^{K_{\Gamma}} 1) = \mathrm{Hom}_{K_{\Gamma}}(\mathrm{Ind}_{I_{\Gamma}}^{K_{\Gamma}} 1, \mathrm{Ind}_{B_{\Gamma} \cap K_{\Gamma}}^{K_{\Gamma}} 1)  \nonumber\\ &=\mathrm{Hom}_{K_{\Gamma}}(\mathrm{Ind}_{I_{\Gamma}}^{K_{\Gamma}} 1, (\mathrm{Ind}_{B_{\Gamma} \cap K_{\Gamma}}^{K_{\Gamma}} 1)^{K_{1}}) = \mathrm{Hom}_{K_{\Gamma}}(\mathrm{Ind}_{I_{\Gamma}}^{K_{\Gamma}} 1, \mathrm{Ind}_{I_{\Gamma}}^{K_{\Gamma}} 1) \nonumber\\
	&= \mathcal{H}(K_{\Gamma}, I_{\Gamma}, 1) = \mathcal{H}(K, J, \lambda), \nonumber
	\end{align}

	\noindent where $K_{1} = \left\lbrace g \in G_{\Gamma} | g \equiv 1 \mod \mathfrak{p}_{\Gamma} \right\rbrace $, where $\mathfrak{p}_{\Gamma}$ is the maximal ideal in the ring of integers of $\Gamma$.
	
	\noindent \textbf{2. General case. }Let now $\lambda$ be some general semi-simple type. The second part of Main Theorem of section 8 in \cite{MR1711578} gives a support preserving Hecke algebra isomorphism $j :\mathcal{H}(\overline{M}, \lambda_{M}) \rightarrow \mathcal{H}(G, \lambda)$,where $\overline{M}$ is a unique Levi subgroup of $G$ which contains the $N_{G}(M)$-stabilizer of the inertia class $D$ and is minimal for this property. Moreover the section 1.5 \cite{MR1711578} gives a tensor product decomposition $\mathcal{H}(\overline{M}, \lambda_{M}) = \mathcal{H}_{1} \otimes_{E}\ldots\otimes_{E}\mathcal{H}_{s}$, where $\mathcal{H}_{i} = \mathcal{H}(G_{i}, J_{i}, \lambda_{i})$ is an affine Hecke algebras of type A and $(J_{i}, \lambda_{i})$ is some simple type with $G_{i}$ some general linear group over a $p$-adic field. 
	
	Let now $M = \mathrm{Hom}_{K}(\mathrm{Ind}_{J}^{K}\lambda, P(\chi)|K)$. The same argument as in the simple type case shows that $M$ is a simple $\mathcal{H}(G, \lambda)$-module. Then by \cite{MR3027127} \S 12 Proposition 2, $M$ is a quotient of $M_{1} \otimes_{E}\ldots\otimes_{E}M_{s}$, where $M_{i}$ is a simple $\mathcal{H}_{i}$-module. Since $E$ is algebraically closed we have by \cite{MR3027127} \S 12 Theorem 1 part a), that $M_{1} \otimes_{E}\ldots\otimes_{E}M_{s}$ is a simple $\mathcal{H}_{1} \otimes_{E}\ldots\otimes_{E}\mathcal{H}_{s}$-module. Then it follows that $M \simeq M_{1} \otimes_{E}\ldots\otimes_{E}M_{s}$.
	
	Let $K_{i}$ denote the maximal compact subgroup of $G_{i}$ and $\mathcal{A}_{i} :=\mathcal{H}(K_{i}, J_{i}, \lambda_{i})$. Then by simple type case $M_{i} \simeq \mathcal{A}_{i}$.

	Since the isomorphism $j$ above is support preserving, we have a similar decomposition for $\mathcal{H}(K, J, \lambda)$, namely $\mathcal{H}(K, J, \lambda) \simeq  \mathcal{A}_{1} \otimes_{E}\ldots\otimes_{E}\mathcal{A}_{s}$.  Then $M \simeq M_{1} \otimes_{E}\ldots\otimes_{E}M_{s} \simeq \mathcal{A}_{1} \otimes_{E}\ldots\otimes_{E}\mathcal{A}_{s} \simeq \mathcal{H}(K, J, \lambda)$.
	
	\medskip
	
	By the discussion above we always have $\mathrm{Hom}_{K}(\mathrm{Ind}_{J}^{K}\lambda, P(\chi)|K)=$ \\ \noindent $\mathcal{H}(K, J, \lambda)$. Let now $\tau$ be a subrepresentation of $P(\chi)|K$, generated by the $\lambda$-isotypic subspace of $P(\chi)|K$. Then by the isomorphisms above we have
	$$\mathrm{Hom}_{K}(\mathrm{Ind}_{J}^{K}\lambda, \tau)=\mathrm{Hom}_{K}(\mathrm{Ind}_{J}^{K}\lambda, P(\chi)|K)=\mathcal{H}(K, J, \lambda),$$

	\noindent as isomorphisms of $\mathcal{H}(K, J, \lambda)$-modules. Then by equivalence of categories (as in Lemma \ref{lem0.10}):
	$$\mathrm{Ind}_{J}^{K} \lambda \simeq \tau \hookrightarrow P(\chi)| K,$$
	
	\noindent where the first arrow from the left comes from the discussion above and the second one is a natural inclusion. Applying $\mathrm{Hom}_{K}(\sigma, .)$ to previous injection and then taking the dimensions of both sides yields an inequality:
	$$m_{\sigma} \leq \dim_{E} \mathrm{Hom}_{G}(\mathrm{c\text{--} Ind}_{K}^{G}\sigma, P(\chi)).$$
	
	Moreover by Lemma \ref{lem0.8} we have: 
	$$\dim_{E} \mathrm{Hom}_{G}(I_{\sigma}, P(\chi)) = \dim_{E} \mathrm{Hom}_{G}(I_{\sigma} \otimes_{\mathfrak{Z}_{\Omega}} \kappa(\mathfrak{m}), P(\chi)). $$
	Since by Corollary \ref{coro1} we have $I_{\sigma} \otimes_{\mathfrak{Z}_{\Omega}} \kappa(\mathfrak{m}) \simeq P(\chi)^{\oplus n_{\sigma}}$, then
	
	\begin{align}
	&\dim_{E} \mathrm{Hom}_{G}(I_{\sigma}, P(\chi)) = \dim_{E} \mathrm{Hom}_{G}(I_{\sigma} \otimes_{\mathfrak{Z}_{\Omega}} \kappa(\mathfrak{m}), P(\chi)) \nonumber \\ 
	&= \dim_{E} \mathrm{Hom}_{G}(P(\chi)^{\oplus n_{\sigma}}, P(\chi)) = n_{\sigma} \nonumber
	\end{align}
	
	The inequality $m_{\sigma} \leq \dim_{E} \mathrm{Hom}_{G}(\mathrm{c\text{--} Ind}_{K}^{G}\sigma, P(\chi)) = n_{\sigma}$ is actually an equality because of the relations
	$$\sum\limits_{\sigma \in \widehat{K}} m_{\sigma}n_{\sigma} =|W(D)|$$
	$$\sum\limits_{\sigma \in \widehat{K}} m_{\sigma}^{2} =|W(D)|,$$
	
	\noindent which were proven in Corollary \ref{coro1}.
\end{proof}

\section{Consequences}\label{H.6}

In this section we will deduce a few results from previous sections and finally prove that if $m_{\sigma}=1$ then :$$\mathfrak{Z}_{\Omega} \simeq \mathrm{End}_{G}(\mathrm{c\text{--} Ind}_{K}^{G} \sigma).$$
\noindent Let us denote $I_{\sigma}:=\mathrm{c\text{--} Ind}_{K}^{G}\sigma$, $A(\sigma_{1},\sigma_{2}):= \mathrm{Hom}_{G}(\mathrm{c\text{--} Ind}_{K}^{G}\sigma_{1}, \mathrm{c\text{--}Ind}_{K}^{G}\sigma_{2})$ and $A_{\sigma}:=A(\sigma,\sigma)$. We begin with the following result:

\begin{thm} \label{prop1}
	Let $\sigma \in \widehat{K}$, where $\widehat{K}$ is a set of all isomorphism classes of irreducible $K$-representations. Write $m_{\sigma}:=\dim_{E} \mathrm{Hom}_{K}(\mathrm{c\text{--} Ind}_{J}^{K} \lambda, \sigma)$  for its multiplicity. Then $\mathrm{Hom}_G(\cI_K^G \sigma_1, \cI_K^G \sigma_2)$ is a  free $\mathfrak{Z}_{\Omega}$-module of rank $m_{\sigma_1}m_{\sigma_2}$, for all $\sigma_{1}, \sigma_{2} \in \widehat{K}$ .
\end{thm}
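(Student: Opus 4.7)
The plan is to decompose $\mathcal{H}(G,\lambda)$ along the isotypic decomposition (\ref{eq:1}) and then combine the specialisation results of the preceding sections with the structure of $\mathfrak{Z}_\Omega$ from Section \ref{H.2} to identify $A(\sigma_1,\sigma_2)$ as a free $\mathfrak{Z}_\Omega$-module of rank $m_{\sigma_1}m_{\sigma_2}$.

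First, applying $\mathrm{End}_G$ to the decomposition (\ref{eq:1}) yields the $\mathfrak{Z}_\Omega$-module decomposition
\[\mathcal{H}(G,\lambda) \simeq \bigoplus_{\sigma_1,\sigma_2\in\widehat{K}} A(\sigma_1,\sigma_2)^{\oplus m_{\sigma_1}m_{\sigma_2}}.\]
By Lemma \ref{lem0.3} the left-hand side is a free finitely generated $\mathfrak{Z}_\Omega$-module, so each $A(\sigma_1,\sigma_2)$, being a direct summand of a free $\mathfrak{Z}_\Omega$-module, is finitely generated projective.

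Second, I would compute the fibre of $A(\sigma_1,\sigma_2)$ at a maximal ideal $\mathfrak{m}$ in the Zariski dense subset $S \subset \Spm\mathfrak{Z}_\Omega$ supplied by Proposition \ref{lem0.6}. Lemma \ref{lem0.9} gives
\[A(\sigma_1,\sigma_2)\otimes_{\mathfrak{Z}_\Omega}\kappa(\mathfrak{m}) \simeq \mathrm{Hom}_G\bigl(I_{\sigma_1},\, I_{\sigma_2}\otimes_{\mathfrak{Z}_\Omega}\kappa(\mathfrak{m})\bigr),\]
and Corollary \ref{coro1} together with the equality $n_\sigma=m_\sigma$ of Lemma \ref{lem0.10} yields $I_{\sigma_2}\otimes_{\mathfrak{Z}_\Omega}\kappa(\mathfrak{m}) \simeq P(\chi)^{\oplus m_{\sigma_2}}$. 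Invoking Lemma \ref{lem0.10} once more to compute $\dim_E\mathrm{Hom}_G(I_{\sigma_1},P(\chi)) = m_{\sigma_1}$, the fibre has $\kappa(\mathfrak{m})$-dimension exactly $m_{\sigma_1}m_{\sigma_2}$ for every $\mathfrak{m}\in S$.

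Third, from the proof of Lemma \ref{lem0.5}, $\mathfrak{Z}_\Omega$ is (non-canonically) isomorphic to a tensor product of rings of the form $E[s_1,\ldots,s_{e-1},s_e^{\pm 1}]$, so it is an integral domain and a localisation of a polynomial ring over $E$. Since $\Sp\mathfrak{Z}_\Omega$ is irreducible, the rank of the finitely generated projective module $A(\sigma_1,\sigma_2)$ is a well-defined constant, which by Zariski density of $S$ must equal the fibre dimension $m_{\sigma_1}m_{\sigma_2}$. The main obstacle will be to upgrade \emph{projective of constant rank} to \emph{free}; the cleanest route is to invoke the Quillen--Suslin theorem together with its extension by Swan to Laurent polynomial rings, which implies that every finitely generated projective module over the Laurent polynomial ring $\mathfrak{Z}_\Omega$ over the field $E$ is free, yielding $A(\sigma_1,\sigma_2) \simeq \mathfrak{Z}_\Omega^{\oplus m_{\sigma_1}m_{\sigma_2}}$.
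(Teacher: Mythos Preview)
Your proof is correct and follows essentially the same route as the paper: decompose $\mathcal{H}(G,\lambda)$ via (\ref{eq:1}) to see that $A(\sigma_1,\sigma_2)$ is a finitely generated projective $\mathfrak{Z}_\Omega$-module, compute the fibre dimension at $\mathfrak{m}\in S$ using Lemma \ref{lem0.9}, Corollary \ref{coro1} and Lemma \ref{lem0.10}, deduce that the rank is everywhere $m_{\sigma_1}m_{\sigma_2}$, and then upgrade to freeness using that $\mathfrak{Z}_\Omega$ is a Laurent polynomial ring over $E$. The paper argues constancy of rank via local constancy on $\Sp\mathfrak{Z}_\Omega$ (Bourbaki) plus density of $S$, whereas you use irreducibility of $\Sp\mathfrak{Z}_\Omega$; these are equivalent here. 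For the final step the paper invokes ``the main result of \cite{MR0469906}'', which is precisely Swan's extension of Quillen--Suslin to Laurent polynomial rings that you anticipated.
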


\begin{proof} Write $A(\sigma_{1},\sigma_{2})$ for $\mathrm{Hom}_{G}(\mathrm{c\text{--} Ind}_{K}^{G}\sigma_{1}, \mathrm{c\text{--}Ind}_{K}^{G}\sigma_{2})$. Recall the decomposition (\ref{eq:1}) :
	$$\mathrm{c\text{--} Ind}_{J}^{G} \lambda = \bigoplus_{\sigma \in \widehat{K}} (\mathrm{c\text{--} Ind}_{K}^{G}\sigma)^{\oplus m_{\sigma}},$$
	
	\noindent so that: 
	$$\mathcal{H}(G,\lambda) \simeq \mathrm{End}_{G}(\mathrm{c\text{--} Ind}_{J}^{G} \lambda) = \prod_{\sigma_{1},\sigma_{2} \in \widehat{K}} \mathrm{Hom}_{G}(\mathrm{c\text{--} Ind}_{K}^{G}\sigma_{1}, \mathrm{c\text{--} Ind}_{K}^{G}\sigma_{2})^{m_{\sigma_{1}}m_{\sigma_{2}}}.$$
	
	Moreover the action of $\mathfrak{Z}_{\Omega}$ on $A(\sigma_{1},\sigma_{2})$ by multiplication, makes $A(\sigma_{1},\sigma_{2})$ into a sub-$\mathfrak{Z}_{\Omega}$-module of $\mathcal{H}(G,\lambda)$, via the previous decomposition. Let's prove that $A(\sigma_{1},\sigma_{2})$ is also a locally free finitely generated $\mathfrak{Z}_{\Omega}$-module.
	
	By the decomposition of $\mathcal{H}(G,\lambda)$, $A(\sigma_{1},\sigma_{2})$ is also a direct summand of $\mathcal{H}(G,\lambda)$.  Moreover by Lemma \ref{lem0.3}, $\mathcal{H}(G,\lambda)$ is a free $\mathfrak{Z}_{\Omega}$-module, and therefore $A(\sigma_{1},\sigma_{2})$ is a projective $\mathfrak{Z}_{\Omega}$-module. Let $\mathfrak{m}$ be a maximal ideal of $\mathfrak{Z}_{\Omega}$ and let  $d_{\mathfrak{m}}$ be the rank of $(A(\sigma_{1},\sigma_{2}))_{\mathfrak{m}}$. Then:
	\begin{align}
	d_{\mathfrak{m}} & = \dim_{E} (A(\sigma_{1},\sigma_{2})_{\mathfrak{m}} \otimes_{(\mathfrak{Z}_{\Omega})_{\mathfrak{m}}} \kappa(\mathfrak{m})) = \dim_{E} (A(\sigma_{1},\sigma_{2})\otimes_{\mathfrak{Z}_{\Omega}} (\mathfrak{Z}_{\Omega})_{\mathfrak{m}} \otimes_{(\mathfrak{Z}_{\Omega})_{\mathfrak{m}}} \kappa(\mathfrak{m}))\nonumber \\ 
	& = \dim_{E} A(\sigma_{1},\sigma_{2})\otimes_{\mathfrak{Z}_{\Omega}} \kappa(\mathfrak{m})\nonumber   
	\end{align}
	\noindent (recall that $\kappa(\mathfrak{m}) \simeq E$). We will prove now that the local rank is constant on a dense set of maximal ideals.
	
	Let $i \in \{1,2\}$. Choose now  $\mathfrak{m}=\mathrm{Ker}(\mathfrak{Z}_{\Omega} \xrightarrow{\chi} E) \in S$ (see Proposition \ref{lem0.6} for definition of the set $S$). By Corollary \ref{coro1} there is an integer $n_{\sigma_{i}}$ such that:
	$$I_{\sigma_{i}} \otimes_{\mathfrak{Z}_{\Omega}} \kappa(\mathfrak{m}) \simeq P(\chi)^{\oplus n_{\sigma_{i}}}.$$
	
	\noindent Then
	\begin{align}
	& \dim_{E} \mathrm{Hom}_{G}(I_{\sigma_{i}}, P(\chi)) = \dim_{E} \mathrm{Hom}_{G}(I_{\sigma_{i}} \otimes_{\mathfrak{Z}_{\Omega}} \kappa(\mathfrak{m}), P(\chi)) \nonumber \\ 
	& = \dim_{E} \mathrm{Hom}_{G}(P(\chi)^{\oplus n_{\sigma_{i}}}, P(\chi)) = n_{\sigma_{i}}.\nonumber 
	\end{align}
	
	\noindent By Lemma \ref{lem0.10} we have:
	$$m_{\sigma_{i}} = \dim_{E} \mathrm{Hom}_{G}(I_{\sigma_{i}}, P(\chi))= n_{\sigma_{i}}.$$
	
	\noindent Then
	\begin{align}
	& \mathrm{Hom}_{G}(I_{\sigma_{1}} \otimes_{\mathfrak{Z}_{\Omega}} \kappa(\mathfrak{m}), I_{\sigma_{2}} \otimes_{\mathfrak{Z}_{\Omega}} \kappa(\mathfrak{m})) \simeq \mathrm{Hom}_{G}( P(\chi)^{\oplus m_{\sigma_{1}}}, P(\chi)^{\oplus m_{\sigma_{2}}})\nonumber \\ 
	&  \simeq \mathrm{End}_{G}(P(\chi))^{ m_{\sigma_{1}} m_{\sigma_{2}}} \simeq (E)^{ m_{\sigma_{1}} m_{\sigma_{2}}},\nonumber 
	\end{align}
	
	\noindent since by Schur's lemma $\mathrm{End}_{G}(P(\chi)) = E$. Finally by Lemma \ref{lem0.9}
	\begin{align}
	&d_{\mathfrak{m}}= \dim_{E} A(\sigma_{1},\sigma_{2}) \otimes_{\mathfrak{Z}_{\Omega}} \kappa(\mathfrak{m}) = \dim_{E} \mathrm{Hom}_{G}(I_{\sigma_{1}} \otimes_{\mathfrak{Z}_{\Omega}} \kappa(\mathfrak{m}), I_{\sigma_{2}} \otimes_{\mathfrak{Z}_{\Omega}} \kappa(\mathfrak{m})) \nonumber \\ 
	& = m_{\sigma_{1}} m_{\sigma_{2}}. \nonumber 
	\end{align}
	
	This proves that $d_{\mathfrak{m}}= m_{\sigma_{1}} m_{\sigma_{2}}$, $\forall \mathfrak{m} \in S$. Moreover this equality is true for all $\mathfrak{m}$ since $\mathfrak{m} \mapsto d_{\mathfrak{m}}$ is locally constant function(\cite{MR782296} Chapitre 2 \S 5.2 Th\'{e}or\`{e}me 1 c)) and $S$ is a dense set. The main result of \cite{MR0469906} allows us to conclude.
\end{proof}

Now we deduce the result announced in the introduction of this paper:

\begin{coro} \label{coro2}
	Let $\sigma \in \widehat{K}$, such that $m_{\sigma}=1$. Then the canonical map $\mathfrak{Z}_{\Omega} \rightarrow \mathrm{End}_{G}(\mathrm{c\text{--} Ind}_{K}^{G} \sigma)$ induces a ring isomorphism:
	$$\mathfrak{Z}_{\Omega} \simeq \mathrm{End}_{G}(\mathrm{c\text{--} Ind}_{K}^{G} \sigma).$$
\end{coro}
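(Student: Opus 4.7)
The plan is to deduce the corollary essentially immediately from Theorem \ref{prop1}. Specializing that theorem to $\sigma_1 = \sigma_2 = \sigma$ with $m_\sigma = 1$ gives that $A_\sigma := \mathrm{End}_G(\cI_K^G \sigma)$ is a free $\mathfrak{Z}_\Omega$-module of rank $1$. The canonical map $\phi : \mathfrak{Z}_\Omega \to A_\sigma$, $z \mapsto z \cdot \mathrm{id}$, is manifestly a $\mathfrak{Z}_\Omega$-linear ring homomorphism, so the whole corollary reduces to showing that $\phi$ is bijective as a module map.

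After choosing any $\mathfrak{Z}_\Omega$-module isomorphism $A_\sigma \cong \mathfrak{Z}_\Omega$, the map $\phi$ becomes multiplication by a single element $u \in \mathfrak{Z}_\Omega$, and the problem reduces to checking that $u$ is a unit. For each maximal ideal $\mathfrak{m}$ of $\mathfrak{Z}_\Omega$, the fiber $A_\sigma \otimes_{\mathfrak{Z}_\Omega} \kappa(\mathfrak{m})$ is one-dimensional over $\kappa(\mathfrak{m})$ by the rank-$1$ freeness, hence nonzero, so $\mathfrak{m} A_\sigma \subsetneq A_\sigma$ is a proper ideal of the ring $A_\sigma$. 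The identity $\mathrm{id} \in A_\sigma$ is a ring unit and therefore cannot lie in this proper ideal, so $\phi(1) = \mathrm{id}$ has nonzero image in the one-dimensional space $A_\sigma/\mathfrak{m} A_\sigma$; this translates to $u \notin \mathfrak{m}$. Since this holds at every maximal ideal, $u$ is a unit and $\phi$ is an isomorphism.

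There is no real obstacle once Theorem \ref{prop1} is available: the entire content is the observation that the unit element of $A_\sigma$ provides a canonical $\mathfrak{Z}_\Omega$-module generator. If one prefers an explicit injectivity check, it can also be obtained by specialization: any $z \in \ker \phi$ acts by zero on each $\cI_K^G \sigma \otimes_{\mathfrak{Z}_\Omega} \kappa(\mathfrak{m}) \simeq P(\chi)$ for $\mathfrak{m}$ in the Zariski-dense set $S$ of Proposition \ref{lem0.6} (using Corollary \ref{coro1} together with the equality $n_\sigma = m_\sigma = 1$ from Lemma \ref{lem0.10}), so $z \in \bigcap_{\mathfrak{m} \in S} \mathfrak{m} = 0$ by density of $S$ and the fact that $\mathfrak{Z}_\Omega$ is a reduced Jacobson $E$-algebra.
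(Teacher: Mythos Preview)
Your proof is correct and follows the intended route: the paper states Corollary \ref{coro2} immediately after Theorem \ref{prop1} without further argument, so deducing it from the rank-$1$ freeness is exactly what is expected. You have simply supplied the missing verification that the canonical map $z \mapsto z\cdot\mathrm{id}$ is not just a $\mathfrak{Z}_\Omega$-linear map into a free rank-$1$ module but actually an isomorphism, and your Nakayama-style argument (the identity cannot lie in the proper ideal $\mathfrak{m}A_\sigma$, hence $u\notin\mathfrak{m}$ for every maximal $\mathfrak{m}$) is clean and correct.

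One minor remark: injectivity of $\phi$ is in fact automatic once you know $A_\sigma$ is free of rank $1$, since a free module of positive rank is faithful and $\ker\phi$ equals the annihilator of $A_\sigma$; so your alternative density argument for injectivity, while correct, is not needed. One could also shortcut the surjectivity step by observing that $\mathfrak{Z}_\Omega$ is an integral domain (it is a subring of the Laurent polynomial ring $\mathfrak{Z}_D$), so writing $\mathrm{id}=ue$ and squaring gives $u(1-uw)=0$ with $e^2=we$, forcing $u$ to be a unit; but your argument works over any commutative base and is just as short.
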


\begin{rem}\label{mult} In the Iwahori case $1$ and $st$ are the only multiplicity free direct summands of $\mathrm{Ind}_I^K 1$. When the type $(J, \lambda)$ is simple $\sigma_{max}(\lambda)$ and $\sigma_{min}(\lambda)$  are  also the only multiplicity free summands of $\mathrm{Ind}_{J}^{K} \lambda$, because the Hecke algebra of a simple is isomorphic to Iwahori Hecke algebra (\cite[(5.6)]{MR1204652}). Assume now that the type $(J, \lambda)$ is semi-simple $[M,\rho]_{G}$-type and is a cover of $(J_1 \times\ldots \times J_r, \lambda_1 \otimes\ldots \otimes \lambda_r)$, where the types $(J_i, \lambda_i)$ are all simple and distinct. Let $P=MN$ be a parabolic subgroup of $G$. Then according to the end of section 6 in \cite{MR1728541}, the restriction of the $K$-representation $\sigma:=\sigma_{\mathcal{P}}(\lambda)$ to $K \cap N$ is trivial, and $\sigma|K\cap M \simeq \sigma_{1}\otimes\ldots\otimes \sigma_{r}$ where $\sigma_{i} := \sigma_{\mathcal{P}_{i}}(\lambda_{i})$. It follows that $m_{\sigma}:=\dim_{E} \mathrm{Hom}_{K}(\mathrm{Ind}_{J}^{K} \lambda, \sigma)=1$ if and only if for each $i$, $\sigma_{i} = \sigma_{max}(\lambda_{i})$ or $\sigma_{i} = \sigma_{min}(\lambda_{i})$. In this case there will be $2^r$ multiplicity free direct summands of $\mathrm{Ind}_{J}^{K} \lambda$.
\end{rem}

\subsection*{Acknowledgments}   The results of this paper are the main part of the author's PhD thesis. The author tremendously grateful to his advisor Vytautas Pa\v{s}k\={u}nas for sharing his ideas with the author and for many helpful discussions. The author would also like to thank the referee for useful comments and corrections. This work was supported by SFB/TR 45 of the DFG.

\bibliographystyle{alpha}
\addcontentsline{toc}{section}{References}
\bibliography{bib}
\nocite{*}

\noindent Morningside Center of Mathematics, No.55 Zhongguancun Donglu, Academy of Mathematics and Systems Science, Beijing , Haidian District, 100190 China
\\
\textit{E-mail address}: pyvovarov@amss.ac.cn
\end{document}